\theoremstyle{plain}
\newtheorem{thm}{Theorem}[section]
\newtheorem{lem}[thm]{Lemma}
\newtheorem{prop}[thm]{Proposition}
\newtheorem{cor}[thm]{Corollary}
\newtheorem*{thm*}{Theorem}
\theoremstyle{definition}
\newtheorem{ex}[thm]{Example}
\theoremstyle{remark}
\newtheorem{rmk}[thm]{Remark}
\newcommand{\ds}[1]{\mathbf{#1}}
\providecommand{\kat}[1]{\text{\textbf{\textsl{#1}}}}
\newcommand{\Cat}{\kat{Cat}}
\newcommand{\Grpd}{\kat{Grpd}}
\newcommand{\infGrpd}{{\mathcal{S}}}
\newcommand{\simplexcat}{\boldsymbol \Delta}
\newcommand{\op}{^{\text{{\rm{op}}}}}
\DeclareMathOperator{\Fun}{Fun}
\DeclareMathOperator{\id}{id}
\DeclareMathOperator{\Boxprod}{\Box}
\newcommand{\ora}{\overrightarrow}
\newcommand{\xra}{\xrightarrow}
\newcommand{\xla}{\xleftarrow}
\newcommand{\eq}{\simeq}
\newcommand{\name}[1]{\ulcorner #1\urcorner}
\newcommand{\Q}{\mathbb{Q}}
\newcommand{\Dec}{\operatorname{Dec}}
\newcommand{\arxiv}[1]{\href{http://arxiv.org/abs/#1}{arxiv:#1}}
\author{Louis Carlier
}
\title{Möbius functions of directed restriction species and free operads, via the generalised Rota formula}
\date{}
\newcommand{\address}{{%
  \bigskip
  \footnotesize
  \textsc{Departament de Matemàtiques \\
  \indent Universitat Autònoma de Barcelona \\
  \indent 08193 Bellaterra (Barcelona), Spain}\par\nopagebreak
  \textit{E-mail address}: \href{mailto:louiscarlier@mat.uab.cat}{\nolinkurl{louiscarlier@mat.uab.cat}}
}}
\begin{document}
\maketitle

\begin{abstract}
We present some tools for providing situations where the generalised Rota formula of \cite{Ca} applies. As an example of this, we compute the Möbius function of the incidence algebra of any directed restriction species, free operad, or more generally free monad on a finitary polynomial monad.
\end{abstract}

\phantomsection
\addcontentsline{toc}{section}{Introduction}
\section*{Introduction}
\label{sec:intro}

Decomposition spaces were introduced by Gálvez-Carrillo, Kock, and Tonks \cite{GKT1, GKT2} as a homotopy setting for Möbius inversion, generalising the Möbius categories of Leroux \cite{Leroux76}, in turn a generalisation of locally finite posets \cite{Rota} and Cartier–Foata finite-decomposition monoids \cite{CF}. 
An important tool in the classical setting is Rota's formula which compares the Möbius functions of two posets related by a Galois connection \cite{Rota}.
In \cite{Ca}, a generalisation of this formula was established, generalising first from Galois connections between posets to adjunctions between Möbius categories, then to infinity adjunctions between Möbius decomposition spaces, and in fact shown to hold for relationships weaker than adjunctions, namely certain bisimplicial spaces called bicomodule configurations. 
Any adjunction of categories (e.g.\ a Galois connection of posets) induces a bicomodule configuration, and the formula of \cite{Ca} reduces to the Rota formula in the case of a Galois connection.

The goal of the present paper is to provide situations where the generalised Rota formula can be applied. 
A key point in the construction is the notion of {\em abacus map}, a certain family of extra maps on a bisimplicial groupoid, which allows to modify the vertical top face maps artificially in an interesting way. This construction appears mysterious, but it is justified by the main example, the box product of the decomposition space of finite sets and the décalage of the decomposition space of finite posets: in this case the modification is precisely what allows to apply the generalised Rota formula to compute the Möbius function of any directed restriction species, starting with the case of the incidence algebra of finite posets.
This first result, for the incidence algebra of the decomposition space of finite posets, is known \cite{ABS}, but its derivation from the generalised Rota formula is new and interesting.
The coalgebra of finite posets is the incidence algebra of a decomposition space which is not a category, and the Möbius function is calculated via a bicomodule configuration (which is not an adjunction) with the decomposition space of finite sets.  The construction also yields the Möbius function of the incidence algebra of any directed restriction species, including the Butcher--Connes--Kreimer Hopf algebra, and of the incidence bialgebra of any free operad, or more generally of any free monad
on a finitary polynomial monad.
 
 In Section~\ref{sec:bicomodule}, before defining the bicomodule configuration interpolating between the decomposition space $\ds{C}$ of finite posets and the decomposition space $\ds{I}$ of finite sets, we set up some general theory, introducing the notion of abacus map to modify a bisimplicial groupoid in a useful way.  We furthermore identify conditions needed to obtain the required structure of bicomodule configuration. The constructions are applied to the box product as mentioned, to obtain a bicomodule configuration interpolating between the decomposition space of finite posets and the decomposition space of finite sets. 
 
 In Section~\ref{sec:mobiusfunctions}, it is shown that this bicomodule configuration is Möbius. The verifications are actually elementary and amount essentially to computing some pullbacks of groupoids. Finally the generalised Rota formula can be applied rather easily, yielding a relationship between the two Möbius functions. Since the Möbius function for the coalgebra of finite sets is known, this gives a formula for the Möbius function for the incidence algebra of the decomposition space of finite posets: 

\medskip
\noindent
{\bf Theorem~\ref{formula}.} {\em
    The Möbius function of the incidence algebra of the decomposition space $\ds{C}$ of finite posets is
    \[
    \mu(P) = 
    \begin{cases}
       (-1)^n &\text{ if $P \in \ds{C}_1$ is a discrete poset with $n$ elements}\\
         0    &\text{ else.}
    \end{cases}
    \]
}

\noindent
We show how the result extends almost verbatim to the incidence algebra of any directed restriction species in the sense of \cite{GKT:restrict}, via the decomposition space interpretation.

\medskip
\noindent
{\bf Corollary~\ref{formulaDRS}.} {\em
    The Möbius function of the incidence algebra of the decomposition space $\ds R$ associated to a directed restriction species $R : \mathbb C\op \to \Grpd$, is 
    \[
        \mu(Q) = 
    \begin{cases}
       (-1)^n &\text{ if the underlying poset of $Q \in \ds R_1$ is discrete with $n$ elements}\\
         0    &\text{ else.}
    \end{cases}
    \]
}

\medskip
\noindent
{\bf Corollary~\ref{trees}.} {\em
The Möbius function of the Butcher--Connes--Kreimer Hopf algebra of rooted forests
is
    \[
        \mu(F) = 
    \begin{cases}
       (-1)^n &\text{ if $F$ consists of $n$ isolated root nodes}\\
         0    &\text{ else.}
    \end{cases}
    \]
}

We also obtain a similar function for free operads, or more generally free monads on finitary polynomial monads.

\medskip
\noindent
{\bf Corollary~\ref{mobiusPtrees}.} {\em
    The Möbius function of the incidence bialgebra of $P$-trees (for any finitary polynomial endo\-functor $P$) is
    \[
        \mu(T) = 
    \begin{cases}
       (-1)^n &\text{ if $T$ consists of $n$ $P$-corollas and possibly isolated edges}\\
         0    &\text{ else.}
    \end{cases}
    \]
}

\subsection*{Acknowledgements}
The author would like to thank Joachim Kock for his advice and support all along the project. The author was supported by PhD grant attached to MTM2013-42293-P, and by the FEDER-MINECO grant MTM2016-80439-P from the Spanish Ministry of Economy and Competitiveness.

\section{Set-up}
\label{sec:preliminaries}

\subsection*{Decomposition spaces}
The notion of decomposition space was introduced by Gálvez-Carrillo, Kock, and Tonks \cite{GKT1}, in the general setting of simplicial $\infty$-groupoids, and independently by Dyckerhoff and Kapranov \cite{DK} under the name unital $2$-Segal space.
The natural level of generality for decomposition spaces in combinatorics is that of simplicial groupoids, because many combinatorial objects have symmetries, which are taken care of by the groupoid formalism. For a survey motivated by combinatorics, see \cite{GKT:combinatorics}.

    To state the definition, recall first that the simplex category $\simplexcat$ has an active-inert factorisation system: an arrow is active when it preserves endpoints, and is inert if it is distance preserving.
    In the present contribution, a \emph{decomposition space} $X: \simplexcat\op \to \Grpd$ is a simplicial groupoid such that the image of any active-inert pushout in $\simplexcat$ is a pullback of groupoids.
It is enough to check that the following squares are pullbacks, where $0 \le k \le n$:
  \begin{center}
    \begin{tikzcd}
        X_{n+1}   \arrow[r, "s_{k+1}"] 
                  \arrow[d, "d_\bot"']
                  \arrow[dr, phantom, "\lrcorner", very near start]
          & X_{n+2} \arrow[d, "d_\bot"] \\
        X_n \arrow[r, "s_k"'] & X_{n+1},
    \end{tikzcd} \!\!\!
    \begin{tikzcd}
       X_{n+2} \arrow[d, "d_\bot"] 
          & X_{n+3} \arrow[l, "d_{k+2}"']
                    \arrow[d, "d_\bot"]
                    \arrow[dl, phantom, "\llcorner", very near start] \\
         X_{n+1}  & X_{n+2} \arrow[l, "d_{k+1}"],
    \end{tikzcd} \!\!\!
    \begin{tikzcd}
        X_{n+1}   \arrow[r, "s_k"] 
                  \arrow[d, "d_\top"']
                  \arrow[dr, phantom, "\lrcorner", very near start]
          & X_{n+2} \arrow[d, "d_\top"]  \\
        X_n \arrow[r, "s_k"'] & X_{n+1},
    \end{tikzcd} \!\!\!
    \begin{tikzcd}
        X_{n+2} \arrow[d, "d_\top"] 
          & X_{n+3} \arrow[l, "d_{k+1}"']
                    \arrow[d, "d_\top"]
                    \arrow[dl, phantom, "\llcorner", very near start] \\
         X_{n+1}  & X_{n+2} \arrow[l, "d_{k+1}"].
    \end{tikzcd}
  \end{center}

Decomposition spaces can be seen as an abstraction of posets. 
The pullback condition is precisely the condition required for the span 
\[
    X_1 \xla{d_1} X_2 \xra{(d_2,d_0)} X_1 \times X_1
\] 
to define a counital coassociative comultiplication on $\Grpd_{/X_1}$, called the {\em incidence coalgebra}. See also \cite{Penney} for the exact role played by the decomposition space condition.

\begin{prop}[{\cite[Proposition 2.3.3]{DK}}, {\cite[Proposition 3.5]{GKT1}}]
    Every Segal space is a decomposition space. In particular, the nerve of a poset is a decomposition space.
\end{prop}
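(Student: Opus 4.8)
The plan is to deduce the decomposition-space condition from the Segal condition by verifying the four families of generating squares displayed above, rather than by arguing with arbitrary active--inert pushouts. The essential input is the reformulation of the Segal condition as a family of fibre products: for a Segal space $X$ and any splitting of $[n]$ at a vertex $j$, the inert restrictions to the front and back edges exhibit a pullback $X_n \simeq X_j \times_{X_0} X_{n-j}$, the two legs to $X_0$ being evaluation at the vertex $j$. Specialising $j=1$ and $j=n$, I would record the two facts that carry the whole argument: up to this equivalence the bottom face map $d_\bot \colon X_{m+1} \to X_m$ is the projection $X_1 \times_{X_0} X_m \to X_m$ obtained by splitting off the first edge, and dually the top face map $d_\top \colon X_{m+1} \to X_m$ is the projection $X_m \times_{X_0} X_1 \to X_m$ obtained by splitting off the last edge.

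With these identifications each generating square becomes a base-change square, hence automatically a pullback. First I would treat the two degeneracy squares. In the bottom case, splitting off the first edge identifies the two vertical maps $d_\bot$ with the projections $X_1 \times_{X_0} X_n \to X_n$ and $X_1 \times_{X_0} X_{n+1} \to X_{n+1}$; since the index $k+1 \ge 1$, the degeneracy $s_{k+1}$ leaves the first edge untouched and acts only on the back factor, taking the form $\id_{X_1} \times s_k$. The square is therefore the base change of $s_k \colon X_n \to X_{n+1}$ along the projection, so a pullback. The top degeneracy square is handled symmetrically by splitting off the last edge.

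The two inner-face squares are slightly more delicate and constitute the real point. For the top face square involving $d_{k+1}$ and $d_\top$, splitting off the last edge identifies both copies of $d_\top$ with projections onto the front factor; because $k+1 \le n+1$ the inner face $d_{k+1}$ omits a vertex lying strictly before the split point, so it acts only on the front factor, as $d_{k+1} \times \id_{X_1}$, and the square is again a base change, hence a pullback. The bottom face square is symmetric, splitting off the first edge and using that $d_{k+2}$ omits a vertex strictly after the shared vertex. The only genuine bookkeeping is to confirm, in each of the four cases, that the face or degeneracy index falls on the correct side of the split vertex so that it commutes with the outer projection; this is precisely where the constraint $0 \le k \le n$ enters, and I expect it to be the main obstacle, being the step most prone to off-by-one errors.

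Finally, the statement about posets follows formally: the nerve of any category has invertible Segal maps by construction, so it is a Segal space and hence, by the first part, a decomposition space; a poset is in particular a category. Granting the Segal fibre-product description and the elementary fact that base-change squares are pullbacks, the argument is essentially just this careful matching of simplicial indices to fibre-product factors.
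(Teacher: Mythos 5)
Your proof is correct, but note that there is no proof in the paper to compare it against: the proposition is stated with attributions to \cite[Proposition 2.3.3]{DK} and \cite[Proposition 3.5]{GKT1}, and the paper takes it as imported background. What you write is essentially the argument of the cited source \cite{GKT1}: reformulate the Segal condition as pullback data over $X_0$ and verify the four generating squares by index bookkeeping. The one step you should tighten is the phrase ``with these identifications each generating square becomes a base-change square, hence automatically a pullback.'' In the homotopy-coherent groupoid setting, transporting $s_{k+1}$ or $d_{k+2}$ across the equivalence $X_{m+1} \eq X_1 \times_{X_0} X_m$ and asserting that it ``acts only on the back factor'' implicitly requires these identifications to be natural in the relevant maps, which is exactly the kind of coherence one should avoid chasing by hand. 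The clean implementation of your idea is via the pasting lemma: the Segal condition at the first vertex says that the square with horizontal maps the inert restriction $X_{m+1} \to X_1$ to the first edge and the evaluation $X_m \to X_0$ at the first vertex, and with vertical maps $d_\bot$ and endpoint evaluation $X_1 \to X_0$, is a pullback. Pasting your generating square horizontally with this Segal square for its right-hand column, the outer rectangle commutes precisely because of the index checks you perform (e.g.\ $k+1 \ge 1$ guarantees $s_{k+1}$ preserves the first edge, and $k+2 \ge 2$ guarantees $d_{k+2}$ deletes a vertex strictly after the shared vertex), and the outer rectangle is itself the Segal square for the left-hand column; since it and the attached square are pullbacks, so is the generating square, by the two-out-of-three property of pullbacks. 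The $d_\top$ cases are dual, splitting off the last edge, again exactly where your constraints $k \le n$ and $k+1 \le n+1$ enter. Your concluding reduction for posets is fine, granting that the nerve is viewed as a discrete simplicial groupoid, so that the strict pullbacks of sets exhibiting the Segal maps as bijections are also homotopy pullbacks of groupoids.
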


A map $f: X \to Y$ of simplicial spaces is \emph{cartesian} on an arrow $[n] \to [k]$ in $\simplexcat$ if the naturality square for $f$ with respect to this arrow is a pullback.
A simplicial map $f: X \to Y$ is \emph{culf} if it is cartesian on all active maps.
The culf functors induce coalgebra homomorphisms between the incidence coalgebras \cite[Lemma 8.2]{GKT1}. 

Recall that a map of groupoids is a \emph{monomorphism} when its fibres are either empty or contractible.
A decomposition space is called \emph{complete} if $s_0: X_0 \to X_1$ is a monomorphism, see \cite[\S 2]{GKT2}.
Under this condition, a general Möbius inversion principle holds \cite{GKT2}.
A complete decomposition space $X$ is called \emph{Möbius} if it is locally finite and of locally finite length \cite[\S 8]{GKT2}. 
The first condition ensures one can take homotopy cardinality to pass from groupoids to $\Q$-vector spaces and obtain the standard incidence coalgebra (as a $\Q$-algebra); the second condition ensures that the Möbius inversion formula admits a cardinality.

Let us recall the definition of décalage \cite{Illusie}. 
Given a simplicial space $X$, the \emph{lower décalage} $\Dec_\bot (X)$ is the simplicial space obtained by deleting $X_0$, all $d_0$ face maps and $s_0$ degeneracy maps. The original $d_0$ maps induce a simplicial map $d_\bot : \Dec_\bot (X) \to X$, called the décalage map.
Similarly, the \emph{upper décalage} $\Dec_\top (X)$ is the simplicial space obtained by deleting $X_0$, all last face maps $d_\top$ and last degeneracy maps $s_\top$. The original $d_\top$ maps induce a simplicial map $d_\top : \Dec_\top X \to X$.

\begin{prop}[{\cite[Proposition 4.9]{GKT1}}]
    If $X$ is a decomposition space then $\Dec_\top X$ and $\Dec_\bot X$ are Segal spaces, and the décalage maps are culf.
\end{prop}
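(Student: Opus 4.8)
The plan is to reduce both assertions---that the two décalages are Segal and that the two décalage maps are culf---to the four families of pullback squares that define a decomposition space, by exploiting the index shift $(\Dec_\bot X)_n = X_{n+1} = (\Dec_\top X)_n$ together with the crucial feature that décalage converts an extremal face map into an inner one. First I would record two standard reductions. On the one hand, a simplicial groupoid $Y$ is a Segal space as soon as the squares
\[
\begin{tikzcd}
Y_{n+1} \arrow[r, "d_\top"] \arrow[d, "d_\bot"'] & Y_n \arrow[d, "d_\bot"] \\
Y_n \arrow[r, "d_\top"'] & Y_{n-1}
\end{tikzcd}
\]
are pullbacks for all $n \ge 1$: such a square exhibits $Y_{n+1} \simeq Y_n \times_{Y_{n-1}} Y_n$, and a short induction on $n$ recovers the full spine decomposition $Y_n \simeq Y_1 \times_{Y_0} \cdots \times_{Y_0} Y_1$. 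On the other hand, since the active maps of $\simplexcat$ are generated by the codegeneracies and the inner coface maps, and since pullbacks paste, a simplicial map is culf exactly when its naturality squares against every degeneracy $s_k$ and every inner face map $d_i$ are pullbacks.

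For the lower décalage, the faces and degeneracies are those of $X$ with indices raised by one (the original $d_0$ and $s_0$ being discarded); thus at level $n$ the bottom face is $d_1\colon X_{n+1}\to X_n$, which is inner in $X$, the top face is the genuine top face $d_{n+1}=d_\top$, and the décalage map is the original $d_0=d_\bot$. Substituting these labels into the basic Segal square above produces precisely an instance of the fourth defining family (the ``$d_\top$ versus inner face'' squares), so $\Dec_\bot X$ is Segal; and the naturality squares of $d_\bot\colon \Dec_\bot X\to X$ against a degeneracy, respectively an inner face, are instances of the first and second families (the ``$d_\bot$ versus degeneracy'' and ``$d_\bot$ versus inner face'' squares), so $d_\bot$ is culf. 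The upper décalage is entirely symmetric: its faces and degeneracies are the original ones with the top pair discarded, so its new top face $d_n\colon X_{n+1}\to X_n$ is inner in $X$ while its décalage map is the genuine $d_\top$; consequently the Segal squares land in the second family and the culf squares in the third and fourth families.

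The only local verifications are then that the simplicial identities make these squares commute with the indicated labels---for the lower décalage, $d_0 d_{j+1}=d_j d_0$ and $d_0 s_{j+1}=s_j d_0$---and that the index arithmetic places the outer map exactly on $d_\bot$ or $d_\top$ and the inner map among the genuine inner faces. I expect the main obstacle to be precisely this bookkeeping rather than any single calculation: one must confirm that the generating family of basic Segal squares and the generating active maps really do suffice, and, most delicately, check the index ranges so that the extremal face produced by the décalage is inner (not extremal) in $X$, which is exactly what allows the square to be one of the four listed families. Once this dictionary is in place, the decomposition-space hypothesis on $X$ delivers every required pullback, and both the Segal and culf conclusions follow at once.
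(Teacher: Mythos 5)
Your proof is correct, and it follows essentially the same route as the source: the paper itself states this proposition without proof, citing \cite[Proposition 4.9]{GKT1}, where the argument is exactly your reduction --- the shifted extremal face of each d\'ecalage becomes an inner face of $X$, so the Segal squares and the cartesianness squares for the d\'ecalage map are (transposes of) instances of the four defining pullback families of a decomposition space. Your index bookkeeping checks out in all four cases (Segal squares in the fourth family for $\Dec_\bot X$ and the second for $\Dec_\top X$; culf squares in the first/second families for $d_\bot$ and third/fourth for $d_\top$), so nothing is missing.
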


\subsection*{Bicomodules}

A \emph{bisimplicial groupoid} is an object of the functor category
\[
    \Fun{(\simplexcat\op \times \simplexcat\op, \Grpd)}.
\]
The horizontal face and degeneracy maps will be denoted by $d_k$ and $s_k$, and the vertical ones by $e_k$ and $t_k$.
A \emph{double Segal space} is a bisimplicial groupoid satisfying the Segal condition for each restriction $\simplexcat\op \times \{[n]\} \to \Grpd$ (the columns) and $\{[n]\} \times \simplexcat\op \to \Grpd$ (the rows).
An \emph{augmented bisimplicial groupoid} is an object of the functor category
\[
    \Fun{(\simplexcat\op_{+} \times \simplexcat\op_{+} \backslash \{(-1,-1)\}, \Grpd)},
\]
where $\simplexcat_{+}$ is the augmented simplex category of all finite ordinals and order-preserving maps.

\emph{Stability} was introduced in \cite{BOORS} for bisimplicial sets, and in \cite{Ca} for bisimplicial $\infty$-groupoids. In the case where every row and column of the bisimplicial groupoid is Segal, the stability condition has a simple expression \cite[Lemma 2.3.3]{Ca}: a double Segal space $B: \simplexcat\op \times \simplexcat\op \to \Grpd$ is stable if just these two squares are pullbacks
\begin{center}
  \begin{tikzcd}
    B_{0,0} 
        & B_{0,1}  \arrow[l, "d_0"'] \\
    B_{1,0} \arrow[u, "e_0"]  
        & B_{1,1}, \arrow[l, "d_0"] 
                  \arrow[u, "e_0"'] 
                  \arrow[ul, phantom, "\ulcorner", very near start]
  \end{tikzcd}
  \qquad
  \begin{tikzcd}
    B_{0,0} 
        & B_{0,1}  \arrow[l, "d_1"'] \\
    B_{1,0} \arrow[u, "e_1"]  
        & B_{1,1}. \arrow[l, "d_1"] 
                  \arrow[u, "e_1"'] 
                  \arrow[ul, phantom, "\ulcorner", very near start]
  \end{tikzcd}
\end{center}

A \emph{comodule configuration} is a culf map $v: C \to Y$ from a Segal space $C$ to a decomposition space $Y$.
A \emph{right pointed comodule configuration} is a comodule configuration $C \to Y$ such that the Segal space $C$ is augmented, and with new bottom sections $s_{-1}: C_{n-1} \to C_n$, see \cite[\S 4]{Ca}.
A right pointed comodule configuration $v: C \to Y$ is \emph{complete} if the new degeneracies $s_{-1}: C_{n-1} \to C_{n}$ are monomorphisms.
A \emph{right Möbius comodule configuration} is a complete right pointed comodule configuration $C \to Y$ such that the decomposition space $Y$ is Möbius and the augmented comodule is Möbius, that is
\begin{itemize}
    \item $C$ is locally finite: the groupoid $C_0$ is finite and both $s_{-1}$ and $d_0$ are finite maps;
    \item $C$ is of locally finite length: for all $a \in C_0$, the fibres of $d_0^{(n)}: \ora{C}_{n} \to C_0$ over $a$ are empty for $n$ sufficiently large, where $\ora{C}_{n}$ is the full subgroupoid of simplices with all principal edges nondegenerate \cite[\S 4.5]{Ca}.  
\end{itemize}
Similarly a \emph{left Möbius comodule configuration} is a complete left pointed comodule configuration $u: D \to X$ such that the decomposition space $X$ is Möbius and the augmented comodule is Möbius, using the extra top degeneracy map $t_{\top+1}$ and the top face map $e_{\top}$.

A \emph{bicomodule configuration} is an augmented stable double Segal space $B$ such that the augmentation maps are culf, and moreover $B_{\bullet, -1}$ and $B_{-1,\bullet}$ are decomposition spaces.
A \emph{Möbius bicomodule configuration} is a bicomodule configuration such that the comodule configuration $B_{0,\bullet} \to B_{-1,\bullet}$ is right pointed, the comodule configuration $B_{\bullet, 0} \to B_{\bullet, -1}$ is left pointed, and such that both left and right comodule configurations are Möbius.
The following picture represents this augmented bisimplicial groupoid, where $X := B_{\bullet, -1}$ and $Y := B_{-1,\bullet}$, and with the extra sections indicated with dotted arrows:
\begin{center}
    \begin{tikzcd}[sep=large]
         & Y_0 \ar[r, "s_0" description, near end] 
         \ar[d, bend left,shift left=1, dotted]
         & Y_1 \ar[l, shift left=1.5, "d_0", near end]
                    \ar[l, shift right=1.5, "d_1"', near end]
                    \ar[r, shift left=1.5, "s_1" description, near end] 
                    \ar[r, shift right=1.5, "s_0" description, near end]
         & Y_2 \ar[l, "d_1" description, near end]
                    \ar[l, shift left=3, "d_0", near end] 
                    \ar[l, shift right=3, "d_2"', near end] 
                    \ar[r, phantom, "\dots"]
                    & \phantom{} \\
       X_0 \ar[d, "t_0" description]
       \ar[r, bend right, dotted]
         & B_{0,0} \ar[d, "t_0" description]
                   \ar[u, "v"']
                   \ar[l, "u"']
                   \ar[r, "s_0" description] 
                   \ar[r, bend right,shift right=1, dotted]
                   \ar[d, bend left,shift left=2, dotted]
         & B_{0,1} \ar[d, "t_0" description]
                   \ar[u, "v"']
                   \ar[l, shift left=1.5, "d_0", near end]
                    \ar[l, shift right=1.5, "d_1"', near end]
                   \ar[r, shift left=1.5, "s_1" description, near end] 
                   \ar[r, shift right=1.5, "s_0" description, near end] 
                   \ar[r, bend right,shift right=2, dotted]
         & B_{0,2} \ar[u, "v"']
                   \ar[d, "t_0" description]
                   \ar[l, "d_1" description, near end]
                   \ar[l, shift left=3, "d_0", near end] 
                   \ar[l, shift right=3, "d_2"', near end]
                   \ar[r, phantom, "\dots"]
                    & \phantom{}\\
       X_1 \ar[u, shift left=1.5, "e_0", near end]
                \ar[u, shift right=1.5, "e_1"', near end]
                \ar[d, phantom, "\vdots"]
         & B_{1,0} \ar[u, shift left=1.5, "e_0", near end]
                   \ar[u, shift right=1.5, "e_1"', near end]
                   \ar[l, "u"'] 
                   \ar[r, "s_0" description, near end] 
                    \ar[d, phantom, "\vdots"]
         & B_{1,1} \ar[u, shift left=1.5, "e_0", near end]
                   \ar[u, shift right=1.5, "e_1"', near end]
                   \ar[l, shift left=1.5, "d_0", near end]
                    \ar[l, shift right=1.5, "d_1"', near end]
                   \ar[r, shift left=1.5, "s_1" description, near end] 
                   \ar[r, shift right=1.5, "s_0" description, near end] 
                    \ar[d, phantom, "\vdots"]
         & B_{1,2}   
                   \ar[u, shift left=1.5, "e_0", near end]
                   \ar[u, shift right=1.5, "e_1"', near end]
                   \ar[l, "d_1" description, near end]
                   \ar[l, shift left=3, "d_0", near end] 
                   \ar[l, shift right=3, "d_2"', near end]
                   \ar[r, phantom, "\dots"]
                    \ar[d, phantom, "\vdots"]
                    & \phantom{}\\
       \phantom{}
          & \phantom{}
          &\phantom{}
          & \phantom{}
    \end{tikzcd}
\end{center}

\begin{thm}[Generalised Rota formula {\cite[Theorem 4.5.2]{Ca}}]\label{genRota}
    Given a Möbius bicomodule configuration $B$ with $X := B_{\bullet, -1}$ and $Y:=B_{-1, \bullet}$, we have
\[
    |\mu^{X}| \star_l |\delta^R| = |\delta^{L}| \star_r |\mu^Y|,
\]
where $\star_l$ and $\star_r$ are dual to the comodule structures,
 $\delta^R$ is the linear functor given by the span
$    B_{0,0} \xleftarrow{} Y_{0} \xrightarrow{} 1$,
and $\delta^{L}$ is the linear functor given by the span
$    B_{0,0} \xleftarrow{} X_{0} \xrightarrow{} 1$.
\end{thm}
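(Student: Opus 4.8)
The plan is to deduce Theorem~\ref{genRota} from the ordinary Möbius inversion principle of \cite{GKT2}, applied simultaneously in the two decomposition spaces $X$ and $Y$, with the bicomodule structure of $B$ serving as the bridge between them. First I would make the two module structures explicit: the left pointed comodule configuration $u\colon B_{\bullet,0}\to X$ endows $\Grpd_{/B_{0,0}}$ with the structure of a left comodule over the incidence coalgebra of $X$, with coaction dualising to $\star_l$, and dually the right pointed comodule configuration $v\colon B_{0,\bullet}\to Y$ makes it a right comodule over $Y$, with coaction dualising to $\star_r$. The structural input that drives the whole argument is that these two coactions are compatible, so that $\Grpd_{/B_{0,0}}$ is a genuine two-sided comodule (an $X$-$Y$-bicomodule); this compatibility is exactly the content of the stability condition on the double Segal space $B$ (\cite[Lemma 2.3.3]{Ca}), the two stability pullback squares being precisely the Beck--Chevalley squares that make the horizontal and vertical coactions commute.

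Next I would expand both sides using the explicit description of the Möbius function of a complete decomposition space as the virtual object $\Phi_{\mathrm{even}}-\Phi_{\mathrm{odd}}$, where $\Phi_n$ is the linear functor represented by the groupoid $\ora{X}_n$ of $n$-simplices all of whose principal edges are nondegenerate \cite{GKT2}. Substituting this into $|\mu^X|\star_l|\delta^R|$ turns the left-hand side into an alternating sum over $n$ of homotopy cardinalities of groupoids of nondegenerate $n$-chains in $X$ that lift through $u$ into $B_{0,0}$ and are there capped by $\delta^R$, i.e.\ pinned over $Y_0\hookrightarrow B_{0,0}$; symmetrically the right-hand side becomes the alternating sum over $m$ of nondegenerate $m$-chains in $Y$ capped by $\delta^L$. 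The goal is then to recognise both of these as two different ways of collapsing one and the same doubly-indexed object: the total alternating sum $\sum_{n,m}(-1)^{n+m}|\ora{B}_{n,m}|$, where $\ora{B}_{n,m}$ is the groupoid of bisimplices nondegenerate in both directions with boundary pinned to $X_0$ and $Y_0$ through the two pointings.

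The heart of the matter, and the step I expect to be the main obstacle, is the cancellation that performs these two collapses. For each fixed horizontal degree the vertical pointing $s_{-1}$ together with the vertical degeneracies should furnish an extra degeneracy, hence a contracting homotopy, on the subfamily of bisimplices whose vertical chain is nontrivial; at the level of homotopy cardinality this pairs bisimplices of even and odd total degree and cancels them, collapsing the double sum onto its $m=0$ row, which is exactly $|\mu^X|\star_l|\delta^R|$. Running the same contraction horizontally collapses instead onto the $n=0$ column, giving $|\delta^L|\star_r|\mu^Y|$, and since both equal $\sum_{n,m}(-1)^{n+m}|\ora{B}_{n,m}|$ the theorem follows. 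The delicate points are that this contraction really cancels the off-diagonal degenerate bisimplices: this forces one to check that the relevant squares are honest pullbacks of groupoids (so that the pairing is an equivalence of groupoids, not merely a bijection on isomorphism classes) and that the new degeneracies $s_{-1}$ are monomorphisms (so that nondegeneracy is detected correctly) --- precisely the stability and completeness hypotheses --- while the Möbius hypotheses (local finiteness and local finiteness of length) are what guarantee that only finitely many $(n,m)$ contribute for fixed boundary data, so that the sums converge and the passage from spans of groupoids to the claimed identity of homotopy cardinalities is legitimate. In the applications developed later in this paper, the abacus map is introduced precisely to ensure that the modified vertical face maps still sit inside pullback squares of this kind, so that a bicomodule configuration satisfying these hypotheses is actually produced.
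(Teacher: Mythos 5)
First, a point of comparison: this paper does not prove Theorem~\ref{genRota} at all --- it is imported verbatim from \cite[Theorem 4.5.2]{Ca} --- so your attempt has to be measured against the proof given there. In outline you have reconstructed it: the proof in \cite{Ca} does factor both sides through a single intermediate virtual quantity, namely $|\mu^X|\star_l|\zeta|\star_r|\mu^Y|$, where $\zeta$ is the \emph{unpinned} span $B_{0,0}\leftarrow B_{0,0}\to 1$; it then applies two one-sided comodule Möbius inversion principles, $|\zeta|\star_r|\mu^Y|=|\delta^R|$ (powered by the right pointing $s_{-1}$) and $|\mu^X|\star_l|\zeta|=|\delta^L|$ (powered by the left pointing $t_{\top+1}$), and glues them by the associativity of the two coactions, which is exactly where stability and culfness of the augmentations enter (\cite[Theorem 2.4.1]{Ca}, the same result this paper invokes at the start of its Section 3). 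Expanding $\mu=\Phi_{\mathrm{even}}-\Phi_{\mathrm{odd}}$, that intermediate quantity is precisely a double alternating sum over bisimplices nondegenerate in both directions, so your ``collapse a doubly-indexed sum two ways'' picture is the right skeleton, and your inventory of where each hypothesis is used (stability for the bicomodule compatibility, completeness for detecting nondegeneracy, the Möbius conditions for finiteness of the sums and existence of cardinalities) is accurate.

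The genuine gap is in your definition of the middle object. You take $\ora{B}_{n,m}$ to have ``boundary pinned to $X_0$ and $Y_0$ through the two pointings''. With \emph{both} pinnings imposed, neither collapse yields the claimed side: collapsing onto the $m=0$ row would leave vertical nondegenerate chains pinned at $Y_0$ (correct --- that is the $\delta^R$ cap) but \emph{also} pinned at $X_0$, an extra condition which $|\mu^X|\star_l|\delta^R|$ does not impose; symmetrically for the $n=0$ column. The pinnings must be the \emph{output} of the two cancellations, not built into the common refinement: the middle object is unpinned, and each comodule inversion $|\zeta|\star_r|\mu^Y|=|\delta^R|$, $|\mu^X|\star_l|\zeta|=|\delta^L|$ \emph{produces} the corresponding cap. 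A secondary inaccuracy lies in the mechanism you propose for the cancellation: $s_{-1}$ cannot furnish a contracting homotopy on $\ora{B}$, since applying a degeneracy to a nondegenerate chain produces a degenerate simplex, which leaves $\ora{B}$ altogether. The actual mechanism, as in \cite{GKT2} and its comodule version in \cite{Ca}, is a case-split pairing on nondegenerate chains (``merge the last edge into the pointed cap, or split it off''), where completeness of the pointing ($s_{-1}$ a monomorphism) makes the dichotomy well defined and upgrades the pairing to an equivalence of groupoids, and local finiteness plus locally finite length make the alternating sums finite. (Also a naming slip: $s_{-1}$ is the \emph{right} pointing, in the $Y$-direction, so it is the one that collapses the $m$-index; the pointing in the $X$-direction is $t_{\top+1}$.) With the middle object corrected to the unpinned double sum and the cancellation recast as the pairing argument, your proposal becomes essentially the proof of \cite{Ca}.
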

Any adjunction of categories induces a bicomodule configuration (which is Möbius if the two categories are), and the formula reduces to the Rota formula in the case of a Galois connection between locally finite posets.

\subsection*{Layered finite posets and layered finite sets}

We refer to \cite{GKT:restrict} for the following material.
An \emph{$n$-layering} of a finite poset $P$ is a monotone map $l: P \to \underline{n}$, where $\underline{n} = \{1, \dots, n \}$ are the objects of the skeleton of the category of finite ordered sets (possibly empty) and monotone maps. The fibres $P_i = l^{-1}(i)$, $i \in \underline{n}$ are called layers, and can be empty.
The objects of the groupoid $\ds{C}_n$ of $n$-layered finite posets are monotone maps $l: P \to \underline{n}$ and the morphisms are triangles
\begin{center}
    \begin{tikzcd}[column sep=small]
        P \arrow[rr, "\eq"] 
           \arrow[dr, ""'] & & P' \arrow[dl, ""]\\
        & \underline{n}, & 
    \end{tikzcd}
\end{center}
where $P \to P'$ is a monotone bijection.
They assemble into a simplicial groupoid $\ds C$. 
The face maps are given by joining layers, or deleting an outer layer for the top and bottom maps. The degeneracy maps are given by inserting empty layers. 
\begin{prop}[{\cite[Proposition 6.12, Lemma 6.13]{GKT:restrict}}]
    The simplicial groupoid $\ds C$ of layered finite posets is a decomposition space (but not a Segal space), and is complete, locally finite, locally discrete, and of locally finite length.
\end{prop}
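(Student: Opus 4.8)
The plan is to work directly from the combinatorial description of $\ds C_n$ as monotone maps $l \colon P \to \underline n$ with morphisms the layer-preserving monotone bijections, and to verify each listed property in turn. I regard the decomposition space condition as the substantial point; the remaining conditions are finiteness statements that I expect to read off quickly from the description.

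For the decomposition space condition, by the criterion recalled above it suffices to check that the four families of squares involving $d_\bot$ and $d_\top$ are pullbacks of groupoids. I would unwind each square explicitly. A point of the source groupoid is a layering of a finite poset, and I would check that the comparison map to the pullback is an equivalence by constructing the inverse: given two pieces of layering data that agree after applying the relevant face or degeneracy map, one reconstructs a unique (up to canonical isomorphism) layered poset. Since $d_\bot$ deletes the first layer (and $d_\top$ the last), while $s_k$ inserts an empty layer, the reconstruction in the degeneracy squares rests on the observation that an empty layer carries no order relations and so may be freely inserted or removed; in the face-map squares one uses that joining non-adjacent pairs of layers commutes, and that deleting an outer layer commutes with joining inner layers. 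These verifications should be routine once the indexing of layers is fixed, and I expect this bookkeeping to be the only place where genuine care is needed. As a sanity check one can alternatively note that $\Dec_\bot \ds C$ and $\Dec_\top \ds C$ should come out Segal, consistently with the décalage proposition.

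To see that $\ds C$ is \emph{not} a Segal space, I would exhibit an explicit failure in degree $2$. Since the only poset admitting a map to $\underline 0 = \emptyset$ is the empty poset, $\ds C_0$ is the terminal groupoid, so the Segal map is $(d_2, d_0) \colon \ds C_2 \to \ds C_1 \times \ds C_1$, sending a $2$-layering $P \to \underline 2$ to the pair $(P_1, P_2)$ of its layers. This map forgets every order relation between an element of the bottom layer and an element of the top layer, so it cannot be an equivalence: taking $P_1 = P_2 = \{\ast\}$, both the two-element antichain and the two-element chain (with bottom element in layer $1$ and top element in layer $2$) map to the same pair, so the Segal map is not even injective on isomorphism classes.

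Finally I would dispatch the remaining conditions. Completeness holds because $\ds C_0 \eq 1$ and $s_0$ sends the point to the empty $1$-layered poset, whose $s_0$-fibre is contractible while all other fibres are empty, so $s_0$ is a monomorphism. Local finiteness reduces to checking that $d_1 \colon \ds C_2 \to \ds C_1$ is a finite map: its fibre over a finite poset $Q$, viewed as a $1$-layered poset, is (up to isomorphism) the set of monotone maps $Q \to \underline 2$, which is finite since $Q$ is, and the $s_0$-fibres are trivially finite. For local finiteness of length I would use that a nondegenerate $n$-simplex over a fixed $1$-layered poset $P$ is an $n$-layering of $P$ with no empty layer; as each layer is then nonempty, $n$ is bounded by the number of elements of $P$, so the fibres of $d_0^{(n)}$ vanish for $n$ large. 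Local discreteness I would verify directly from the definition of the morphisms in $\ds C_n$: the automorphism group of a layered finite poset is finite, which is the feature relevant here, and this is the most elementary of the four conditions.
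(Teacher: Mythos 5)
Your proposal is essentially sound, but note first that the paper offers no proof of this statement at all: it is quoted directly from \cite{GKT:restrict} (Proposition 6.12 and Lemma 6.13), so the only ``paper proof'' to compare with is that citation. Your direct verification is therefore a self-contained alternative, in the same spirit as the cited source. The mechanism you identify for the four active-inert pullback families is the right one: the inner face and degeneracy maps only reindex the layering while leaving the underlying poset untouched, so in each square one leg retains the full poset (including the relations involving the layer deleted by the outer face map), which is what makes the reconstruction of the common refinement unique. Your chain-versus-antichain example in degree $2$ is the standard witness that $\ds C$ fails to be Segal, and it is correctly set up: $\ds C_0 \eq 1$, and $(d_2,d_0)$ forgets exactly the order relations between the two layers. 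Your treatment of completeness, of the fibre of $d_1 \colon \ds C_2 \to \ds C_1$ over $Q$ (equivalent to the finite \emph{discrete} set of monotone maps $Q \to \underline 2$), and of the length bound $n \le |P|$ for layerings with no empty layer are all correct; only the notation $d_0^{(n)}$ is off --- for the decomposition space itself the relevant map is the long-edge map $\ora{\ds C}_n \to \ds C_1$, whereas $d_0^{(n)}$ is the comodule variant the paper uses later for $\ora{\ds B}$.

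The one genuine blemish is your final sentence on local discreteness: finiteness of automorphism groups is \emph{not} the feature relevant there. Local discreteness means that the fibres of $d_1 \colon \ds C_2 \to \ds C_1$ are discrete groupoids (homotopy sets), which is what guarantees that the section coefficients are genuine integers with no automorphism denominators; finiteness of the automorphism groups of layered posets is instead the statement that $\ds C_1$ is a locally finite \emph{groupoid}, which is the half of local finiteness your local-finiteness paragraph omits (there you only check that $s_0$ and $d_1$ are finite maps). Fortunately both facts do appear in your writeup: the discreteness of the $d_1$-fibre is exactly your identification of it with the set of monotone maps $Q \to \underline 2$, and the finite automorphism groups are asserted, just under the wrong heading. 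So the error is one of attribution rather than substance --- swap the two justifications and the argument is complete.
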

The incidence coalgebra of $\ds{C}$ has comultiplication given by the span
\[
    \ds{C}_1 \xla{d_1} \ds{C}_2 \xra{(d_2, d_0)} \ds{C}_1 \times \ds{C}_1,
\]
where $d_1$ joins the two layers, and $d_2$ and $d_0$ return the two layers.
The comultiplication of a poset is thus obtained by summing over admissible cuts (a $2$-layering of the poset) and taking tensor product of the two layers.

\bigskip

Similarly, let $\mathbf I_n$ denote the groupoid of all layerings of finite sets. Again these groupoids assemble into a simplicial groupoid, denoted $\ds{I}$.

\begin{prop}[{\cite[Proposition 4.3, Lemma 4.4]{GKT:restrict}}]
    The simplicial groupoid $\ds{I}$ is a Segal space, and hence a decomposition space, which is complete, locally finite, locally discrete, and of locally finite length.
\end{prop}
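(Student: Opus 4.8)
The plan is to identify $\ds I$ explicitly as the nerve (two-sided bar construction) of the monoidal groupoid of finite sets under disjoint union, and then read off all of the stated properties. An $n$-layering $l\colon S \to \underline n$ of a finite set is determined, up to the isomorphisms allowed in $\ds I_n$, by the ordered tuple of its fibres $(S_1, \dots, S_n)$ with $S_i = l^{-1}(i)$, and conversely every such tuple assembles into a layering; so taking fibres gives an equivalence $\ds I_n \eq (\ds I_1)^n$, where $\ds I_1$ is the groupoid of finite sets and bijections, while $\ds I_0 \eq 1$ since the only $0$-layering is $\emptyset \to \underline 0$. Under this identification the simplicial operators are exactly those of a monoidal nerve: the inner face map $d_i$ ($0<i<n$) joins layers $i$ and $i{+}1$, sending $(\dots, S_i, S_{i+1}, \dots)$ to $(\dots, S_i \sqcup S_{i+1}, \dots)$; the outer maps $d_0$ and $d_n$ delete the bottom, resp.\ top, factor; and $s_i$ inserts an empty layer, namely the monoidal unit $\emptyset$. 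Thus $\ds I$ is the nerve of $(\ds I_1, \sqcup, \emptyset)$.

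The Segal condition is then immediate. Since $\ds I_0 \eq 1$, the $n$-th Segal map $\ds I_n \to \ds I_1 \times_{\ds I_0} \cdots \times_{\ds I_0} \ds I_1 \eq (\ds I_1)^n$ is the spine map, which under the identification above just records the individual layers and is therefore an equivalence (indeed an isomorphism). Hence $\ds I$ is a Segal space, and by the proposition recalled above (every Segal space is a decomposition space) it is in particular a decomposition space.

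It remains to check completeness, local finiteness, local discreteness, and locally finite length; each reduces to a short computation of homotopy fibres in the model $(\ds I_1, \sqcup, \emptyset)$. For \emph{completeness}, $s_0 \colon \ds I_0 \eq 1 \to \ds I_1$ names the empty set, and its fibre over a finite set $S$ is empty when $S$ is non-empty and contractible when $S$ is empty (because $\Aut(\emptyset)$ is trivial); so $s_0$ is a monomorphism. For \emph{local finiteness}, the objects of $\ds I_1$ have finite automorphism groups $\Sigma_{|S|}$, the fibre of $d_1 \colon \ds I_2 \to \ds I_1$ over $S$ is the finite set of ordered decompositions $S = S_1 \sqcup S_2$ (i.e.\ subsets of $S$), and $s_0$ has empty-or-point fibres, so $d_1$ and $s_0$ are finite maps. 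For \emph{local discreteness}, this same fibre is a \emph{discrete} groupoid: splitting a finite set with named elements into an ordered tuple of disjoint subsets admits no nontrivial automorphisms, and the analogous fibres of the iterated comultiplication are discrete for exactly the same reason.

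Finally, \emph{locally finite length} follows from analysing non-degeneracy. A $1$-simplex $S_i \in \ds I_1$ is degenerate precisely when $S_i = \emptyset$, so an $n$-simplex has all principal edges non-degenerate exactly when all of its layers are non-empty; the non-degenerate $n$-simplices with prescribed long edge $S = S_1 \sqcup \cdots \sqcup S_n$ are therefore the layerings of $S$ into $n$ non-empty layers, and there are none once $n > |S|$. I expect no genuine obstacle in this argument: the only step requiring care is the initial identification of $\ds I$ with the monoidal nerve, in particular checking that the non-strict associativity of $\sqcup$ does not interfere with the Segal maps — which is automatic, since $\ds I_n$ is literally a power of $\ds I_1$ and the spine map is an isomorphism — after which every listed property is a direct fibre computation.
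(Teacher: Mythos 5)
Your proof is correct, and since the paper does not prove this proposition at all — it is imported verbatim from \cite[Proposition 4.3, Lemma 4.4]{GKT:restrict} — the relevant comparison is with that source, where the argument is essentially the one you give: $\ds{I}$ is identified as the classifying nerve of the monoidal groupoid of finite sets and bijections, the Segal condition follows from $\ds{I}_0 \eq 1$ together with $\ds{I}_n \eq (\ds{I}_1)^n$, and completeness, local finiteness, local discreteness, and locally finite length are verified by exactly the fibre computations you perform (the fibre of $d_1$ over $S$ being the discrete finite set of ordered decompositions $S = S_1 \sqcup S_2$, and nondegenerate simplices having no empty layers, so that $n > |S|$ forces the fibre of the iterated comultiplication to be empty). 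One small inaccuracy: your parenthetical claim that the spine map $\ds{I}_n \to (\ds{I}_1)^n$ is ``indeed an isomorphism'' is not quite right — taking fibres is injective but not surjective on objects, since an arbitrary $n$-tuple of finite sets need not be pairwise disjoint — but this is harmless, as an equivalence is all the Segal condition requires, the pullbacks over the strictly terminal groupoid $\ds{I}_0$ being automatically homotopy pullbacks.
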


The simplicial groupoid $\ds{C}$ is the decomposition space corresponding to the terminal directed restriction species, finite posets and convex maps, while $\ds{I}$ is the decomposition space corresponding to the terminal restriction species, finite sets and injections.
The incidence coalgebra of $\ds{I}$ is the binomial coalgebra \cite[\S 2.4]{GKT:restrict} with well-known Möbius function $(-1)^n$ for a set with $n$ elements.

\section{Bisimplicial groupoids, abacus maps, and bicomodule configurations}\label{sec:bicomodule}

We want to define a bicomodule configuration interpolating between the decomposition space $\ds{C}$ of finite posets and the decomposition space $\ds{I}$ of finite sets, in order to relate the Möbius functions of the incidence algebras of these decomposition spaces. As explained in the introduction, we shall achieve this by modifying the box product $\ds{I} \Boxprod \Dec_\bot \ds{C}$, and we introduce the notion of abacus map for a bisimplicial groupoid for this purpose. The modification is necessary in order to be able to define an extra vertical degeneracy map, in turned required to establish the Möbius property.



\subsection*{Abacus maps}
\label{sub:abacus}

Let $B$ be a bisimplicial groupoid with horizontal face and degeneracy maps denoted by $d_k$ and $s_k$, and vertical face and degeneracy maps denoted by $e_k$ and $t_k$. A family $f$ of maps $f_{i,j}: B_{i+1,j} \to B_{i,j+1}$ is called an \emph{abacus map} if 
\begin{itemize}
    \item for all $i$, the map $f_{i,\bullet}: B_{i+1,\bullet} \to \Dec_\bot (B_{i,\bullet})$ is simplicial (between rows), 
    \item for all $j$, the map $f_{\bullet,j}: \Dec_\top(B_{\bullet,j}) \to B_{\bullet,j+1}$ is simplicial (between columns) except for the top face map, 
    \item $d_\bot f_{i,j} t_\top = \id$, where $d_\bot$ is the horizontal bottom face map, $t_\top$ is the vertical top degeneracy map.
\end{itemize}

\begin{thm}\label{bisimplicialalt}
    Let $B$ be a bisimplicial groupoid, and $f$ an abacus map.
    Define new vertical top degeneracy maps $\tilde{e}_\top := d_\bot f_{i,j}$. Then the groupoids $B_{i,j}$ with the new $\tilde{e}_\top$ form a bisimplicial groupoid, denoted $\tilde{B}$ (for which $f$ is still an abacus map). 
\end{thm}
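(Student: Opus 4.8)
The plan is to check that $\tilde B$ is a genuine functor $\simplexcat\op\times\simplexcat\op\to\Grpd$. Since we alter only the vertical top face maps, the rows $\tilde B_{i,\bullet}=B_{i,\bullet}$ are untouched and remain simplicial, and every structure map other than the $\tilde e_\top$ agrees with that of $B$. Hence the horizontal simplicial identities and all relations among unchanged operators hold automatically, and it suffices to verify two things: (a) \emph{bifunctoriality for $\tilde e_\top$}, i.e.\ that each $\tilde e_\top\colon B_{i+1,\bullet}\to B_{i,\bullet}$ commutes with the horizontal face and degeneracy maps; and (b) the \emph{vertical simplicial identities} in each column that involve a top face map. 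For (a) I would simply use the factorisation $\tilde e_\top=d_\bot\circ f_{i,\bullet}$: by the first abacus axiom $f_{i,\bullet}\colon B_{i+1,\bullet}\to\Dec_\bot(B_{i,\bullet})$ is a map of rows, and the décalage map $d_\bot\colon\Dec_\bot(B_{i,\bullet})\to B_{i,\bullet}$ is always simplicial, so the composite commutes with all horizontal operators.

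For (b), only the relations containing a $\tilde e_\top$ need attention. The retraction relation $\tilde e_\top\, t_\top=\id$ is exactly the third abacus axiom. The ``mixed'' relations — $\tilde e_\top$ against a non-top vertical face $e_i$, and $\tilde e_\top$ against a vertical degeneracy $t_j$ — I would obtain by writing $\tilde e_\top=d_\bot f$, commuting the horizontal face $d_\bot$ past the vertical operators (this is the bisimpliciality of $B$), and then invoking the second abacus axiom, which guarantees precisely that $f_{\bullet,j}$ commutes with every vertical face except the top one and with every vertical degeneracy. For instance $e_i\,\tilde e_\top=e_i\,d_\bot f=d_\bot\,e_i\,f=d_\bot\,f\,e_i=\tilde e_\top\,e_i$ for $i$ not top, and symmetrically for the degeneracies.

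The one relation \emph{not} covered by the second axiom — and the step I expect to be the main obstacle — is the \emph{two consecutive top faces} identity coming from $e_{n-1}e_n=e_{n-1}e_{n-1}$, namely $\tilde e_\top\,\tilde e_\top=\tilde e_\top\, e_{n-1}$. Equivalently, it asks that the new last-face map $\tilde d_\top=d_\bot\circ f_{\bullet,j}\colon\Dec_\top(B_{\bullet,j})\to B_{\bullet,j}$ commute even with the \emph{top} face of $\Dec_\top(B_{\bullet,j})$ — exactly the one commutation that $f_{\bullet,j}$ is not assumed to satisfy, so neither factor of $\tilde d_\top$ supplies it. Here the two décalage directions must be played off against each other: using the first axiom in the form $f\,d_\bot=d_1\,f$ together with the horizontal simplicial identity $d_\bot d_1=d_\bot d_\bot$, I would rewrite the left-hand side $\tilde e_\top\tilde e_\top=d_\bot f\,d_\bot f$ as $d_\bot d_\bot\,f f$, reducing the whole identity to a single coherence of the shape $d_\bot\,f f=f\,e_{\mathrm{top}}$ relating the iterated abacus map to the top vertical face of the décalage. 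Establishing this coherence from the abacus data is the crux; it is the relation that the artificial modification is engineered to force, and it is where the delicate interaction of the two décalages (and ultimately the retraction axiom) does its work.

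Finally, once $\tilde B$ is known to be bisimplicial, the assertion that $f$ is still an abacus map for it is immediate: the first axiom concerns the unchanged rows, and since $\Dec_\top$ discards the top face we have $\Dec_\top(\tilde B_{\bullet,j})=\Dec_\top(B_{\bullet,j})$, so the second and third axioms read verbatim as before.
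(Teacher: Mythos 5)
Up to the decisive identity, what you do coincides with the paper's own proof: row-simpliciality of $\tilde e_\top$ via the factorisation $d_\bot\circ f_{i,\bullet}$ through $\Dec_\bot(B_{i,\bullet})$, the mixed column identities $e_k\tilde e_\top=\tilde e_\top e_k$ and $t_k\tilde e_\top=\tilde e_\top t_k$ for $k$ strictly below the top via bisimpliciality of $B$ plus the second abacus axiom, the retraction $\tilde e_\top t_\top=\id$ via the third axiom, and the closing observation that $\Dec_\top(\tilde B_{\bullet,j})=\Dec_\top(B_{\bullet,j})$, so that $f$ remains an abacus map for $\tilde B$. Your reduction of the consecutive-top-faces identity $\tilde e_\top\tilde e_\top=\tilde e_\top e_{\top-1}$ is also correct: using $f_{i,j}d_\bot=d_1f_{i,j+1}$ and $d_\bot d_1=d_\bot d_\bot$ one gets $\tilde e_\top\tilde e_\top=d_\bot d_\bot f_{i,j+1}f_{i+1,j}$, and you are right that the second axiom excludes exactly the commutation that would make this case go through like the others: on $B_{i+2,j}$ one would need $f_{i,j}\,e_{i+1}=e_{i+1}\,f_{i+1,j}$, and $e_{i+1}$ is precisely the top face of $\Dec_\top(B_{\bullet,j})$ in that degree.

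But at that point your proposal stops being a proof. You never establish the coherence $d_\bot f_{i,j+1}f_{i+1,j}=f_{i,j}e_{\top-1}$; you only declare it to be ``the relation that the artificial modification is engineered to force''. It is not: that equation is, word for word, the first clause in the paper's definition of a \emph{perfect} abacus map, imposed as an \emph{additional} hypothesis in Proposition~\ref{simplicialandidempotent} to obtain strictly stronger conclusions (simpliciality of $f_{\bullet,j}$ on $\tilde B$ including the new top faces, and idempotency of the construction), so it cannot be expected to follow from the three abacus axioms alone; and your gesture towards ``the retraction axiom doing its work'' does not produce it, since $d_\bot f t_\top=\id$ only yields identities after inserting a top degeneracy, which shows the two sides of your crux agree on degenerate simplices and nothing more. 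Note also that what the theorem needs is only the weaker identity with an outer $d_\bot$, namely $d_\bot\bigl(d_\bot f_{i,j+1}f_{i+1,j}\bigr)=d_\bot\bigl(f_{i,j}e_{\top-1}\bigr)$ (your coherence is sufficient but not necessary), and your write-up derives neither. For comparison, the paper's proof subsumes this case as $k=i-1$ of its identity (2), $e_k\tilde e_\top=\tilde e_\top e_k$ for $k<i$, and dispatches all $k$ with the same one-line appeal to bisimpliciality and the second axiom, with no separate argument for $k=i-1$ --- so you have correctly located the one delicate point; but a complete argument must either derive the weak identity above from the axioms or assume a condition of perfect type (which the motivating example $\ds I\Boxprod\Dec_\bot\ds C$ does satisfy: both sides merge the last two set-layers into the bottom poset-layer), and your proposal does neither.
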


\begin{proof}
Firstly, let us prove that the new $\tilde{e}_\top$ is simplicial between rows:
since the map $f_{i, \bullet}$ is simplicial, and by the face-map identities for the rows, we have
\[
\tilde{e}_\top d_k = d_\bot f_{i,j} d_k = d_\bot d_{k+1} f_{i,j+1} = d_{k} d_\bot f_{i,j+1} = d_k \tilde{e}_\top,
\]
and similarly for $s_k$.

Secondly, let us check that the columns are simplicial.
The simplicial identities involving $\tilde{e}_\top = \tilde{e}_i: B_{i,j} \to B_{i-1, j}$ are:
\begin{enumerate}
    \item $t_k \tilde{e}_\top = \tilde{e}_\top t_k$, for $k < i$;
    \item $e_k \tilde{e}_\top = \tilde{e}_\top e_k$, for $k < i$;
    \item $\tilde{e}_\top t_\top = \id$.
\end{enumerate}
To verify the first two identities, we use the commutativity of horizontal maps against vertical maps, and that $f_{\bullet, j}$ is simplicial except for the top maps.
The third identity is exactly the last condition in the definition of an abacus map.
\end{proof}

We say an abacus map is \emph{perfect} if $f_{i,j} e_{\top-1} = d_\bot f_{i,j+1} f_{i+1,j}$ and $e_\top = d_\bot f_{i,j}$ for all $i,j$. We get the following proposition, whose proof is straightforward.

\begin{prop}\label{simplicialandidempotent}
    Let $B$ be a bisimplicial groupoid and $f$ a perfect abacus map.
    Then the map $f_{\bullet,j}: \Dec_\top(\tilde{B}_{\bullet,j}) \to \tilde{B}_{\bullet,j+1}$ simplicial, and the construction of Theorem~\ref{bisimplicialalt} is idempotent.
\end{prop}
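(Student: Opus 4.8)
The plan is to check the two assertions in turn, reducing the first to the extra defining equation of a perfect abacus map and the second to the fact that the construction of Theorem~\ref{bisimplicialalt} touches only the vertical top faces. For the \emph{simpliciality} claim, I would first note that passing from $B$ to $\tilde B$ alters only the top vertical face maps $e_\top$: all other vertical faces $e_k$ ($k<\top$), all vertical degeneracies $t_k$, and all horizontal maps $d_k, s_k, d_\bot$ are left unchanged. Consequently $\Dec_\top(\tilde B_{\bullet,j})$ and $\Dec_\top(B_{\bullet,j})$ agree as simplicial groupoids, since the face that upper décalage deletes is exactly the one that was modified, while the surviving faces (the top one being the second-to-top column face $e_{\top-1}$) are untouched. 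Hence every simplicial identity for $f_{\bullet,j}\colon \Dec_\top(\tilde B_{\bullet,j}) \to \tilde B_{\bullet,j+1}$ other than the one for the top face map of the source is already part of the abacus structure of $f$, and it survives the change of target from $B_{\bullet,j+1}$ to $\tilde B_{\bullet,j+1}$ (which again differs only in its top face). The single remaining identity is the commutation of $f$ with the top face $e_{\top-1}$ of $\Dec_\top$; unwinding $\tilde e_\top = d_\bot f$ from Theorem~\ref{bisimplicialalt}, it reads
\[
    \tilde{e}_\top\, f_{i,j} \;=\; d_\bot\, f_{i-1,j+1}\, f_{i,j} \;=\; f_{i-1,j}\, e_{\top-1},
\]
where the first equality is the definition of $\tilde e_\top$ and the second is precisely the first defining equation of a perfect abacus map. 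This settles simpliciality.

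For \emph{idempotency}, the key observation is that the recipe $\tilde e_\top := d_\bot f_{i,j}$ is assembled only from the horizontal bottom face map $d_\bot$ and the family $f$, neither of which is altered by the construction. Since Theorem~\ref{bisimplicialalt} guarantees that $f$ remains an abacus map on $\tilde B$, the construction may be applied again, and it recomputes the top face as $d_\bot f_{i,j}$, which is once more $\tilde e_\top$. Thus $\widetilde{\tilde{B}} = \tilde B$, i.e.\ the construction is idempotent. The second defining equation $e_\top = d_\bot f$ of a perfect abacus map records the complementary fact that the perfect maps are exactly the fixed points $\tilde B = B$; one checks directly that $f$ is again perfect on $\tilde B$, its second equation holding by construction and its first equation being unchanged.

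The work here is entirely bookkeeping rather than estimation. The one delicate point is matching indices across the two décalages: identifying the top face of $\Dec_\top$ with the column face $e_{\top-1}$, tracking the degree shifts carefully, and confirming that replacing $B_{\bullet,j+1}$ by $\tilde B_{\bullet,j+1}$ affects only the single face-map identity that perfect condition~1 supplies. Once this correspondence is laid out explicitly, both claims follow immediately, in agreement with the assertion that the proof is straightforward.
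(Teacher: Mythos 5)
Your verification is correct and is precisely the direct check that the paper leaves to the reader (its ``proof'' is just the declaration that it is straightforward): the upper décalage deletes exactly the modified top face, so $\Dec_\top(\tilde{B}_{\bullet,j})=\Dec_\top(B_{\bullet,j})$ and the single outstanding identity $\tilde{e}_\top f_{i,j}=f_{i-1,j}\,e_{\top-1}$ unwinds via $\tilde{e}_\top=d_\bot f_{i-1,j+1}$ to the first perfectness equation, while reapplying the recipe $d_\bot f_{i,j}$ visibly reproduces $\tilde{e}_\top$, giving idempotency. One small caveat in your closing aside: the fixed points $\tilde{B}=B$ are characterised by the second perfectness equation alone, so ``the perfect maps are exactly the fixed points'' overstates matters, since perfectness additionally requires the first equation (which, as you note, is what upgrades $f_{\bullet,j}$ to a genuinely simplicial map).
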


Given an augmented bisimplicial groupoid $B$, we say an abacus map is \emph{left augmented} if there are maps 
$f_{i,-1}: B_{i+1,-1} \to B_{i,0}$ 
such that
$f_{i,\bullet}: B_{i+1,\bullet} \to \Dec_\bot (B_{i,\bullet})$
is augmented simplicial for all $i\ge 0$, that is the following diagram commutes
\begin{center}
    \begin{tikzcd}[column sep=scriptsize]
       & B_{i,0} & B_{i,1} \ar[l, "d_1"'] \\
       B_{i+1,-1} \ar[ur, "f_{i,-1}"]& B_{i+1,0} \ar[l, "u"]\ar[ur, "f_{i,0}"'] &
    \end{tikzcd}
\end{center}
and moreover $u f_{i,-1} = e_\top$:
\begin{center}
    \begin{tikzcd}
       B_{i,-1 } & B_{i,0} \ar[l, "u"'] \\
       B_{i+1,-1} \ar[u, "e_\top"] \ar[ur, "f_{i,-1}"']& 
    \end{tikzcd}
\end{center}

We say an abacus map is \emph{right augmented} if there are maps
$f_{-1,j}: B_{0,j} \to B_{-1,j+1}$ such that $f_{\bullet,j}: \Dec_\top(B_{\bullet,j}) \to B_{\bullet,j+1}$ is augmented simplicial except for the top face map for all $j \ge 0$, that is the following diagram commutes
\begin{center}
  \begin{tikzcd}[column sep=scriptsize]
       & B_{-1,j}  \\
       B_{0,j} \ar[ur, "f_{-1,j}"] & B_{0,j+1} \ar[u, "v"'] \\
       B_{1,j} \ar[u, "e_0"] \ar[ur, "f_{0,j}"'] &
  \end{tikzcd}
\end{center}
and moreover $v = d_0 f_{-1,0}$:
\begin{center}   
  \begin{tikzcd}
       B_{-1,j} & B_{-1,j+1} \ar[l, "d_0"']  \\
       B_{0,j} \ar[u, "v"] \ar[ur, "f_{-1,j}"'] &   \\
  \end{tikzcd}
\end{center}

We say an abacus map is \emph{augmented} if it is left and right augmented.

\begin{lem}\label{usimplicial}
    Given an augmented bisimplicial groupoid $B$ and a left augmented abacus map. Then the augmentation map $u: \tilde{B}_{\bullet, 0} \to \tilde{B}_{\bullet, -1}$ is simplicial.
\end{lem}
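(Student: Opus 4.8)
The plan is to verify that $u$ commutes with every vertical structure map of the columns $\tilde{B}_{\bullet,0}$ and $\tilde{B}_{\bullet,-1}$. Since $u$ is a horizontal map and the passage from $B$ to $\tilde{B}$ alters only the vertical top face map (Theorem~\ref{bisimplicialalt}), most of these identities come for free: for every non-top vertical face map $e_k$ and every vertical degeneracy $t_k$, the equalities $u e_k = e_k u$ and $u t_k = t_k u$ hold simply because horizontal and vertical maps commute in the bisimplicial groupoid $B$. Thus the whole statement reduces to the single compatibility with the new top face map $\tilde{e}_\top := d_\bot f_{i,j}$.

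Before treating that case, I would first pin down the vertical structure on the target column $\tilde{B}_{\bullet,-1}$. Applying the modification formula at the augmented column $j=-1$ gives $\tilde{e}_\top = d_\bot f_{i,-1} = u\, f_{i,-1}$, since the horizontal bottom face out of $B_{i,0}$ is precisely the augmentation $u$; by the left-augmented condition $u\, f_{i,-1} = e_\top$ this equals the original top face map. Hence $\tilde{B}_{\bullet,-1}$ carries its original simplicial structure (it is just $X = B_{\bullet,-1}$), and the remaining identity to verify is $u\,\tilde{e}_\top = e_\top\, u$.

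The core of the argument is then the computation
\[
u\,\tilde{e}_\top = u\, d_\bot f_{i,0} = u\, d_1 f_{i,0} = u\, f_{i,-1}\, u = e_\top\, u,
\]
whose four equalities I would justify in turn: the first is the definition of $\tilde{e}_\top$; the second is the augmented simplicial identity $u\,d_\bot = u\,d_1$ in the row $B_{i,\bullet}$ (the two horizontal bottom faces out of $B_{i,1}$ become equal after augmenting by $u$); the third is the bottom (augmentation) square of the left-augmented diagram, expressing that $f_{i,\bullet}$ is augmented simplicial, namely $d_1 f_{i,0} = f_{i,-1}\, u$; and the fourth is once more the left-augmented condition $u\, f_{i,-1} = e_\top$. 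The one point requiring care --- and the only real obstacle --- is the bookkeeping: one must recognise that the top face map on the target column is the unmodified $e_\top$, and then assemble exactly the two defining squares of a left-augmented abacus map together with the augmented horizontal face identity, rather than any vertical simplicial relation. Everything else is formal.
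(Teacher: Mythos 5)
Your proposal is correct and follows essentially the same route as the paper: the paper's proof is exactly the chain $u\,\tilde e_\top = u\,d_0 f_{i,0} = u\,d_1 f_{i,0} = u\,f_{i,-1}\,u = e_\top u$, justified by the same two left-augmentation squares and the augmented identity $u\,d_0 = u\,d_1$, after reducing to the new top face map. Your preliminary check that the target column $\tilde{B}_{\bullet,-1}$ retains its original top face map is a point the paper leaves implicit, and it is a correct and welcome addition.
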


\begin{proof}
    We only need to check the commutativity with the new top face map, which is a direct verification:
  $
    u \tilde e_\top = u d_0 f_{i,0} = u d_1 f_{i,0} =  u f_{i,-1} u = e_\top u$.
\end{proof}

\begin{lem}\label{vaugmentation}
    Given an augmented bisimplicial groupoid $B$ and a right augmented abacus map. Then $v : \tilde B_{0,\bullet} \to \tilde B_{-1, \bullet}$ is an augmentation map.
\end{lem}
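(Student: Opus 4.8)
The plan is to unwind what it means for $v$ to be an augmentation map for the modified vertical simplicial structure of $\tilde B$, and then to reduce the claim to a single identity at vertical level $1$. Concretely, $v \colon \tilde B_{0,\bullet} \to \tilde B_{-1,\bullet}$ being an augmentation map means that, for each $j$, the map $v \colon B_{0,j} \to B_{-1,j}$ exhibits $B_{-1,j}$ as the augmentation of the column $\tilde B_{\bullet,j}$, and that these assemble naturally in $j$. Naturality in $j$ (that is, $v$ being horizontally simplicial) is inherited unchanged from $B$, since the abacus construction alters only the vertical top face maps and leaves the horizontal maps $d_k$ and $s_k$ untouched. By Theorem~\ref{bisimplicialalt} each column $\tilde B_{\bullet,j}$ is already a genuine simplicial groupoid, so by the standard description of augmented simplicial objects it remains only to verify the augmentation identity $v \tilde e_0 = v \tilde e_1$ at vertical level $1$, where $\tilde e_0, \tilde e_1 \colon \tilde B_{1,j} \to \tilde B_{0,j}$ are the two face maps.

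At level $1$ the bottom face $\tilde e_0 = e_0$ is unmodified, while the top face is the new one $\tilde e_1 = \tilde e_\top = d_\bot f_{0,j}$, so the statement comes down to checking $v\, e_0 = v\, \tilde e_\top$. I would establish this by the direct computation
\[
    v\, \tilde e_\top = v\, d_0 f_{0,j} = d_0\, v f_{0,j} = d_0\, f_{-1,j} e_0 = v\, e_0 ,
\]
where the first equality is the definition of $\tilde e_\top$ (with $d_\bot = d_0$), the second uses horizontal naturality of $v$, namely $v d_0 = d_0 v$, the third is the first right-augmentation axiom $v f_{0,j} = f_{-1,j} e_0$, and the fourth is the second right-augmentation axiom $v = d_0 f_{-1,j}$. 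This mirrors the computation of Lemma~\ref{usimplicial} with the horizontal and vertical directions interchanged: there one invokes that $u$ is a horizontal augmentation ($u d_0 = u d_1$), whereas here one invokes that $v$ is horizontally natural.

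I expect the main obstacle to be conceptual rather than computational, namely isolating exactly what must be proved. The key observation is that, since only the top vertical face maps are modified and the degeneracies are untouched, the augmentation condition is unaffected except through $\tilde e_\top$, and that for an augmented simplicial object it suffices to check the single identity at level $1$. The remaining care is bookkeeping with the décalage index shift built into the abacus map: one must remember that $f_{i,j}\colon B_{i+1,j} \to B_{i,j+1}$ raises the horizontal index while lowering the vertical one, and that $f_{-1,j}\colon B_{0,j} \to B_{-1,j+1}$ is the augmented level of the column map $f_{\bullet,j}$, so that the two right-augmentation axioms are applied with the correct indices. Once this reduction is in place, the verification is the four-term identity displayed above.
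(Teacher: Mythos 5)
Your proof is correct and takes essentially the same route as the paper: the paper likewise reduces the claim to checking that $v$ coequalises $e_0$ and $\tilde e_1$, and verifies this by the chain $v e_0 = d_0 f_{-1,j} e_0 = d_0 v f_{0,j} = v d_0 f_{0,j} = v \tilde e_1$, which is exactly your four-term computation read in the opposite direction, using the same two right-augmentation axioms and the horizontal naturality of $v$.
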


\begin{proof}
    We only need to verify it coequalises $e_0$ and $\tilde e_1$.
    We have
    $v e_0 = d_0 f_{-1,j} e_0 = d_0 v f_{0,j} = v d_0 f_{0,j} = v \tilde e_1$.
\end{proof}

\begin{rmk}
A augmented perfect abacus map produces an example of a 
cocartesian nerve in the sense of \cite{Ca}, 
that is a map
$N_{\text{cocart}}: \Cat_{\infty/\Delta^1}^{\text{cocart}} \to \Fun((\overline{\simplexcat_{/\Delta^1}})\op, \infGrpd)$ where
$N_{\text{cocart}}(p)_{\overline{i,j}}$ is a mapping space preserving cocartesian arrows, and $\overline{\simplexcat_{/\Delta^1}}$ is a category of shape like $\simplexcat_{/\Delta^1}$, but with extra diagonal maps.
This ensures $\Grpd_{/B_{0,0}}$ is pointed as a right comodule over $\Grpd_{/B_{1,-1}}$.
\end{rmk}

\begin{ex}[Bisimplicial groupoid associated to a functor]
Given a functor $F:X \to Y$ between categories, we consider the groupoid $B_{i,j}$ whose objects consist of $(i{+}j{+}1)$-tuples of composable morphisms such that the $i$ first morphisms are given as images of morphisms in $X$.
An object in $B_{i,j}$ can be pictured as follows:
\begin{center}
      \begin{tikzcd}[column sep=tiny]
        Fx \ar[rr, "Ff"] & & Fx' \ar[rr, ""] & & \dots \ar[rrr] & & & Fx^{(i)} \ar[dllllll] \\
       & y^{(j)} \ar[rr] & & y^{(j-1)} \ar[rr]  && \dots \ar[rr] & & y. 
      \end{tikzcd}
\end{center}

The groupoids $B_{i,j}$ assemble into a (augmented) bisimplicial groupoid, where horizontal face and degeneracy maps are given by face and degeneracy maps of the nerve of $Y$, and similarly, vertical face and degeneracy maps are given by face and degeneracy maps of the nerve of $X$. This is moreover a bicomodule configuration.
There is an (augmented) perfect abacus map sending
\begin{center}
      \begin{tikzcd}[column sep=tiny]
        Fx \ar[rr, ""]  & & Fx' \ar[dl] \\
       & y & 
      \end{tikzcd}
\qquad to \qquad
      \begin{tikzcd}[column sep=tiny]
       & Fx \ar[dl, ""] & \\
       Fx' \ar[rr] & & y.
      \end{tikzcd}  
\end{center}
\end{ex}

\subsection*{Bicomodule configurations}
Given two simplicial groupoids $X$ and $Y$, their {\em box product} \cite{Joyal-Tierney} is the bisimplicial groupoid $X \Boxprod Y$ given by the groupoids $X_i \times Y_j$, with horizontal and vertical face and degeneracy maps induced by those of $X$ and $Y$.
  We shall be concerned rather with the box product
 \[
  B:= X \Boxprod \Dec_\bot Y ,
  \]
and use the following notation:
    the horizontal maps $\underline{d}_k: B_{i,j} \to B_{i, j-1}$ and $\underline{s}_k: B_{i,j} \to B_{i, j+1}$ are given by:
    \[
        \underline{d}_k = \id_{X_i} \times\,d_{k+1}, \qquad
        \underline{s}_k = \id_{X_i} \times\,s_{k+1}, \qquad
        0 \le k \le j,
    \]
     where $d_k$ and $s_k$ are the face and degeneracy maps of $Y$.
    The vertical maps $\underline{e}_k : B_{i,j} \to B_{i-1, j}$ and  $\underline{t}_k : B_{i,j} \to B_{i+1, j}$ are given by:
        \[
        \underline{e}_k = e_{k} \times \id_{Y_{j+1}}, \qquad 
        \underline{t}_k = t_{k} \times \id_{Y_{j+1}}, \qquad
        0 \le k \le i,
    \]
     where $e_k$ and $t_k$ are the face and degeneracy maps of $X$.
Note that since the second factor is given by décalage, there is also an extra bottom degeneracy map given by $\underline{s}_{-1} = \id_{X_i} \times s_{0}$.
There are also augmentation maps $u: B_{\bullet, 0} \to X_\bullet \times Y_0$ given by $\id_X \times d_1$, and $v: B_{0, \bullet} \to X_0 \times Y$ given by the décalage map.

The following trivial lemma will be invoked several times.
\begin{lem}\label{pbkproduct}
The following square of groupoids is a pullback:
\begin{center}
    \begin{tikzcd}
        X \times Y \ar[r, "f \times \id_Y"] 
           \ar[d, "\id_X \times g"'] & X' \times Y \ar[d, "\id_{X'} \times g"]\\
        X \times Y' \ar[r, "f \times \id_{Y'}"'] & X' \times Y'.
    \end{tikzcd}
\end{center}
\end{lem}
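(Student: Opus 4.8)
The plan is to exhibit the square as a product of two squares, each of which is a pullback for trivial reasons, and then to invoke the fact that products commute with pullbacks. Concretely, I would factor the square in the statement as the corner-by-corner product of the square
\[
\begin{tikzcd}
X \ar[r, "f"] \ar[d, "\id_X"'] & X' \ar[d, "\id_{X'}"] \\
X \ar[r, "f"'] & X'
\end{tikzcd}
\]
with the square
\[
\begin{tikzcd}
Y \ar[r, "\id_Y"] \ar[d, "g"'] & Y \ar[d, "g"] \\
Y' \ar[r, "\id_{Y'}"'] & Y'.
\end{tikzcd}
\]
Taking products of the four corners, and of the horizontal and vertical legs, reproduces exactly the square of the lemma.

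Next I would observe that each factor is a pullback. A commuting square in which one pair of parallel edges consists of identity maps is always a homotopy pullback: the homotopy fibre product of the cospan $X \xrightarrow{f} X' \xleftarrow{\id_{X'}} X'$ is equivalent to $X$ via the projection recording the point of $X$, and the comparison map from the top-left corner is this equivalence. The same argument, with the two directions interchanged, handles the second square.

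Finally I would conclude using the fact that the product of two homotopy pullback squares is again a homotopy pullback square, since products are limits and limits commute with limits. Alternatively, one can read off the universal property directly: a cone with vertex $T$ over the cospan $X' \times Y \to X' \times Y' \leftarrow X \times Y'$ consists of maps $a = (a_1, a_2)\colon T \to X' \times Y$ and $b = (b_1, b_2)\colon T \to X \times Y'$ together with a homotopy $(\id_{X'}\times g)\,a \simeq (f \times \id_{Y'})\,b$, which amounts to $a_1 \simeq f b_1$ and $g a_2 \simeq b_2$; the free data is then precisely a map $(b_1, a_2)\colon T \to X \times Y$, with $a_1$ and $b_2$ determined up to contractible choice, yielding the asserted universal property through $f \times \id_Y$ and $\id_X \times g$.

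I do not expect a genuine obstacle here, consistent with the lemma being labelled trivial; the only point meriting care is that the ambient notion of pullback is the homotopy pullback of groupoids. This is exactly why the identity-leg squares qualify as homotopy pullbacks, and homotopy products commute with homotopy pullbacks just as strict limits do, so the factorisation argument goes through unchanged.
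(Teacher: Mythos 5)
Your proof is correct. Note, however, that the paper offers no proof at all for this lemma: it is introduced with the remark ``the following trivial lemma will be invoked several times'' and stated without argument, so there is no official proof to compare against. Your verification is a clean way to substantiate the claim: factoring the square as the corner-by-corner product of two identity-leg squares, observing each factor is a homotopy pullback (a cospan in which one leg is an equivalence always yields a pullback square of this shape), and using that products, being limits, commute with pullbacks. Your alternative direct check of the universal property is also sound, since a homotopy into a product decomposes componentwise, leaving $(b_1,a_2)\colon T \to X \times Y$ as the free data with the rest determined up to contractible choice. You are also right to flag that ``pullback'' here means homotopy pullback of groupoids (equivalently, the iso-comma construction); this is the one point where a naive strict-pullback reading could mislead, and your identity-leg squares are pullbacks in exactly this homotopy-invariant sense, which is the sense in which the paper uses the lemma (e.g.\ in Lemmas~\ref{doubleSegal}, \ref{culf}, and \ref{stable}).
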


\begin{prop}\label{bicomoduleconf}
   Suppose $f$ is an augmented abacus map for $B := X \Boxprod \Dec_\bot Y$, such that $f_{\bullet,j}$ is a right fibration (that is cartesian on bottom face maps $\underline{e}_\bot$), and $f_{i,\bullet}$ is a left fibration (that is cartesian on top face maps $\underline{d}_\top$).
    Then the modified augmented bisimplicial groupoid $\tilde{B}$ with the new $\tilde{\underline{e}}_\top$ obtained from the abacus map as in Theorem~\ref{bisimplicialalt} form an augmented bisimplicial groupoid which is double Segal, stable, and such that the augmentation maps are culf.
\end{prop}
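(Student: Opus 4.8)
The plan is to address the four assertions about $\tilde B$—that it is an augmented bisimplicial groupoid, that it is double Segal, that it is stable, and that its augmentations are culf—one at a time, reducing each to pullback checks. These checks come in two flavours: product-form squares, disposed of by Lemma~\ref{pbkproduct}, and squares that genuinely involve the artificial top face map $\tilde{\underline e}_\top = \underline d_\bot f$, where the two fibration hypotheses on $f$ must be used. The augmented bisimplicial structure is essentially already in hand: Theorem~\ref{bisimplicialalt} assembles the $\tilde{\underline e}_\top$ together with the unchanged maps into a bisimplicial groupoid, Lemma~\ref{usimplicial} shows $u$ stays simplicial, and Lemma~\ref{vaugmentation} shows $v$ stays an augmentation, so this point is a matter of citing the preceding results.

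For the double Segal condition I would separate rows from columns. The rows are the easy case: the abacus modification touches only a vertical (column) face map, so the row $\tilde B_{i,\bullet}$ is literally $B_{i,\bullet} = X_i \times \Dec_\bot Y$, which is Segal because $\Dec_\bot Y$ is a Segal space (the décalage of a decomposition space) and the constant factor $X_i$ preserves the defining Segal pullbacks by Lemma~\ref{pbkproduct}. The columns are the crux. Here I would verify the Segal condition inductively, peeling off the last principal edge, so as to present $\tilde B_{n,j}$ as the fibre product $\tilde B_{n-1,j}\times_{\tilde B_{0,j}}\tilde B_{1,j}$ along the modified top face $\tilde{\underline e}_\top$ and the (unmodified) last-edge map built from bottom faces. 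Since $\tilde{\underline e}_\top = \underline d_\bot f_{\bullet,j}$ and $f_{\bullet,j}$ is a right fibration, that is cartesian on the bottom faces $\underline e_\bot$, the square witnessing this fibre product should factor as a pullback coming from the cartesianness of $f_{\bullet,j}$ composed with a product-form pullback controlled by Lemma~\ref{pbkproduct}. This is exactly where the right-fibration hypothesis is indispensable, and I expect it to be the main obstacle of the whole proof.

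For stability I would invoke \cite[Lemma 2.3.3]{Ca}: once all rows and columns are Segal, stability reduces to checking that the two displayed squares are pullbacks. The square built from the outer face maps $\underline e_0$ and $\underline d_0$ is of pure product form $e_0^X\times\id$ against $\id\times d_1^Y$, and neither map is modified, so it is a pullback by Lemma~\ref{pbkproduct}. The second square involves $\tilde{\underline e}_1 = \tilde{\underline e}_\top$ together with $\underline d_1$; here I would substitute $\tilde{\underline e}_\top = \underline d_\bot f$ and use the cartesianness of $f$—the right-fibration property on $\underline e_\bot$ together with the left-fibration property of $f_{i,\bullet}$ on the top horizontal face $\underline d_\top$—to exhibit it as a pullback, again after splitting off the product-form part via Lemma~\ref{pbkproduct}.

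Finally, for the culf condition on the augmentations the key observation is that culf requires cartesianness only on \emph{active} maps, while the modified top face map is \emph{inert} (the omit-last-vertex coface is distance preserving). Hence the modification never enters the culf checks. Concretely, $u$ is $\id\times d_1$, whose naturality squares against the inner vertical faces and all vertical degeneracies are product-form and thus pullbacks by Lemma~\ref{pbkproduct}; and $v$ is $\id_{X_0}$ times the décalage map $\Dec_\bot Y \to Y$, which is culf by the décalage proposition, so $v$ is culf as well. In summary, every difficulty is localized in the column Segal squares and the second stability square, where the artificial top face must be analysed through the fibration properties of the abacus map, while rows, the first stability square, and both culf checks come down to product pullbacks and the culf-ness of the décalage map.
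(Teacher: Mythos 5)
Your overall route coincides with the paper's: bisimpliciality and the augmentations come from Theorem~\ref{bisimplicialalt} together with Lemmas~\ref{usimplicial} and~\ref{vaugmentation}; the rows are the unchanged $X_i \times \Dec_\bot Y$; the column Segal squares are factored through $f_{\bullet,j}$, with the $f$-square a pullback by the right-fibration hypothesis and the residual square of product form, handled by Lemma~\ref{pbkproduct} (this is verbatim the paper's Lemma~\ref{doubleSegal}); stability is reduced via \cite[Lemma 2.3.3]{Ca} to the two displayed squares; and your observation that the modified top face is inert, so that culf checks never see the modification, is exactly why the paper's Lemma~\ref{culf} only needs the two squares it checks ($u$ by Lemma~\ref{pbkproduct}, $v$ because the décalage map of a decomposition space is culf).

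There is, however, one genuine gap, in the second stability square --- precisely the spot you flagged but then disposed of too quickly. After substituting $\tilde{\underline e}_\top = \underline d_0 \circ f$ and pasting vertically, the bottom square is indeed a pullback by the \emph{left}-fibration hypothesis (cartesianness of $f_{i,\bullet}$ on $\underline d_\top$), but the residual top square, with $\underline d_0$ vertical and $\underline d_\top$ horizontal between the groupoids $\tilde B_{0,j}$, is \emph{not} of product form: both of its legs are of the form $\id_{X_0} \times (-)$, acting only on the $Y$-coordinate ($\underline d_0 = \id \times d_1$ and $\underline d_\top = \id \times d_{j+2}$), so Lemma~\ref{pbkproduct} says nothing about it. Your appeal to the right-fibration property is also spurious here: that hypothesis concerns cartesianness on the bottom vertical faces $\underline e_0$, and no such face occurs in this square. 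What is actually needed is that the corresponding square in $Y$ itself, with the identity $d_1 d_3 = d_2 d_1$ (an active face against a top face), is a pullback --- that is, the decomposition-space axiom for $Y$, which your argument never invokes anywhere in the stability step. This is the one input beyond fibration properties and product pullbacks, and it is how the paper's Lemma~\ref{stable} closes the argument; with it inserted, your proof is complete and agrees with the paper's.
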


The bisimpliciality follows from Theorem~\ref{bisimplicialalt}. We split the rest of the proof into the following Lemmas~\ref{doubleSegal}-\ref{stable}.

\begin{lem}\label{doubleSegal}
    Suppose $f_{\bullet,j}$ is a right fibration, that is cartesian on bottom face maps $\underline{e}_\bot$.
    Then for every $i \ge 0$, the simplicial groupoid $\tilde{B}_{i,\bullet}$ is Segal, and for every $j \ge 0$ the simplicial groupoid $\tilde{B}_{\bullet,j}$ is Segal.
\end{lem}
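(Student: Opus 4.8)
The idea is to treat rows and columns separately, reducing each Segal condition to pullbacks that can be read off from the box-product structure together with the fibration hypothesis and Lemma~\ref{pbkproduct}. For the rows nothing really happens: the abacus construction alters only the vertical top face map, so the horizontal simplicial structure is untouched and $\tilde B_{i,\bullet}=B_{i,\bullet}=X_i\times\Dec_\bot Y$. Since $Y$ is a decomposition space, $\Dec_\bot Y$ is a Segal space by \cite[Proposition~4.9]{GKT1}, and taking the product with the constant groupoid $X_i$ preserves the Segal pullbacks, each of which splits off the $X_i$-factor by Lemma~\ref{pbkproduct}. Hence every row is Segal.

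For the columns the content is that the top face has been replaced by $\tilde{\underline e}_\top=\underline d_\bot f_{\bullet,j}$, so Segalness of $X$ is of no direct use and must be supplied instead by the fibration hypothesis. I would first reduce the Segal condition for $\tilde B_{\bullet,j}$ to the assertion that for every $n$ the square
\begin{center}
\begin{tikzcd}
\tilde B_{n+1,j}\ar[r,"\tilde{\underline e}_\top"]\ar[d,"\underline e_\bot"'] & \tilde B_{n,j}\ar[d,"\underline e_\bot"]\\
\tilde B_{n,j}\ar[r,"\tilde{\underline e}_\top"'] & \tilde B_{n-1,j}
\end{tikzcd}
\end{center}
is a pullback. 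This is the standard reformulation of the Segal condition: the front face $\{0,\dots,n\}$ and the back face $\{1,\dots,n+1\}$ of an $(n{+}1)$-simplex glue along their shared face $\{1,\dots,n\}$ to recover the simplex exactly when the space is Segal; the square commutes by the identity $\underline e_\bot\,\tilde{\underline e}_\top=\tilde{\underline e}_\top\,\underline e_\bot$ that holds in $\tilde B$ by Theorem~\ref{bisimplicialalt}.

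To see the square is a pullback I would substitute $\tilde{\underline e}_\top=\underline d_\bot f_{\bullet,j}$ and display it as the horizontal pasting
\begin{center}
\begin{tikzcd}
B_{n+1,j}\ar[r,"f_{n,j}"]\ar[d,"\underline e_\bot"'] & B_{n,j+1}\ar[r,"\underline d_\bot"]\ar[d,"\underline e_\bot"] & B_{n,j}\ar[d,"\underline e_\bot"]\\
B_{n,j}\ar[r,"f_{n-1,j}"'] & B_{n-1,j+1}\ar[r,"\underline d_\bot"'] & B_{n-1,j}.
\end{tikzcd}
\end{center}
The left square commutes because $f_{\bullet,j}$ is simplicial on bottom faces, and it is a pullback precisely by the right-fibration hypothesis (cartesianness of $f_{\bullet,j}$ on $\underline e_\bot$); the right square has horizontal legs $\id_X\times d^Y_1$ and vertical legs $e^X_0\times\id$, hence is a pullback by Lemma~\ref{pbkproduct}. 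Pasting two pullbacks yields a pullback, so the lozenge is a pullback and the column is Segal. The only non-formal step is the reduction above: because the modified map sits at the top, one must find the reformulation of the Segal condition through which the cartesianness of $f$ can act, and then track that $\tilde{\underline e}_\top$ factors as $\underline d_\bot f$. Once this is in place the fibration hypothesis supplies exactly the one pullback that the product structure alone cannot; in particular the argument never uses that $X$ itself is Segal.
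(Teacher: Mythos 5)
Your proof is correct and follows essentially the same route as the paper: rows are Segal as products of $X_i$ with $\Dec_\bot Y$, and for columns you reduce the Segal condition to the squares pairing $\tilde{\underline{e}}_\top$ against $\underline{e}_0$ and paste the $f_{\bullet,j}$-square (a pullback by the right-fibration hypothesis) with the $\underline{d}_0$-square (a pullback by Lemma~\ref{pbkproduct}), which is exactly the paper's decomposition. Your observation that Segalness of $X$ is never used for the columns is accurate and consistent with the paper's argument.
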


\begin{proof}
The simplicial groupoid $\tilde{B}_{i,\bullet}$ is Segal since it is the product with the groupoid $X_i$ of the décalage of the decomposition space $Y$.
The simplicial groupoid $\tilde{B}_{\bullet,j}$ is Segal if, for all $j \ge 0$, and $n \ge 1$, the following square is a pullback
\begin{center}
    \begin{tikzcd}
        \tilde{B}_{n+1,j} \ar[d, "\underline{e}_0"'] \ar[r, "\tilde{\underline{e}}_{n+1}"] & \tilde{B}_{n,j} \ar[d, "\underline{e}_0"] \\
        \tilde{B}_{n,j} \ar[r, "\tilde{\underline{e}}_{n}"']  & \tilde{B}_{n-1,j}.
    \end{tikzcd}
\end{center}
It is the outer square in the following diagram:
\begin{center}
    \begin{tikzcd}
        \tilde{B}_{n+1,j} \ar[d, "\underline{e}_0"'] \ar[r, "f_{n,j}"] & \tilde{B}_{n,j+1} \ar[d, "\underline{e}_0"']  \ar[r, "\underline{d}_0"] & \tilde{B}_{n,j} \ar[d, "\underline{e}_0"] \\
        \tilde{B}_{n,j} \ar[r, "f_{n-1,j}"'] & \tilde{B}_{n-1,j+1} \ar[r, "\underline{d}_0"']  & \tilde{B}_{n-1,j}.
    \end{tikzcd}
\end{center}
The right-hand square is a pullback by Lemma~\ref{pbkproduct}.
The left-hand square is a pullback because $f_{\bullet,j}$ is a right fibration.
\end{proof}

\begin{lem}\label{culf}
    The augmentation maps $u: \tilde{B}_{\bullet,0} \to X \times Y_0$ and
$v: \tilde{B}_{0, \bullet} \to X_0 \times Y$ are culf.
\end{lem}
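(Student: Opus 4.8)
The plan is to lean on two observations: the culf condition constrains only the \emph{active} maps, and the abacus modification of Theorem~\ref{bisimplicialalt} changes nothing except the top vertical face maps, which are \emph{inert}. Recall that a simplicial map is culf precisely when it is cartesian on every active map, and that it suffices to test this on inner face maps and degeneracies, since these generate the active maps and cartesian squares paste. In particular the outer (top and bottom) faces are inert, so the genuinely mixed maps $\tilde{\underline e}_\top = \underline d_\bot f_{i,j}$ produced by the abacus never enter the verification of either augmentation.

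First I would handle $u \colon \tilde B_{\bullet,0} \to X \times Y_0$, a simplicial map in the vertical (first) index; that it is simplicial is Lemma~\ref{usimplicial}. On the active generators of the vertical direction---the inner faces $\underline e_k$ with $0 < k < \top$ and all degeneracies $\underline t_k$---the structure of $\tilde B_{\bullet,0}$ coincides with that of the unmodified box product, where these maps are $e_k \times \id_{Y_1}$ and $t_k \times \id_{Y_1}$. Since $u = \id_{X_\bullet} \times d_1$ with the factor $d_1 \colon Y_1 \to Y_0$ constant along the vertical direction, each naturality square of $u$ against such a generator has exactly the product shape of Lemma~\ref{pbkproduct}, hence is a pullback. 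Thus $u$ is cartesian on all active maps, i.e.\ culf; notably this needs none of the fibration hypotheses on $f$.

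Next I would handle $v \colon \tilde B_{0,\bullet} \to X_0 \times Y$, a simplicial map in the horizontal (second) index, which is an augmentation map by Lemma~\ref{vaugmentation}. Because the abacus alters only vertical top faces, the horizontal simplicial structure of the bottom row is untouched, so $\tilde B_{0,\bullet} = X_0 \times \Dec_\bot Y$ and $v = \id_{X_0} \times d_\bot$, where $d_\bot \colon \Dec_\bot Y \to Y$ is the décalage map. As $Y$ is a decomposition space, $d_\bot$ is culf by \cite[Proposition 4.9]{GKT1}; and since $X_0 \times (-)$ preserves pullbacks (Lemma~\ref{pbkproduct}), the product $\id_{X_0} \times d_\bot$ is again cartesian on every active map. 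Hence $v$ is culf.

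The one place that rewards care is the bookkeeping that isolates the abacus-modified top face as inert, and therefore invisible to the culf test: this is exactly what prevents the mixing of the two factors from ever obstructing cartesianness. Once this is observed there is no substantial obstacle---$u$ collapses to the product pullback Lemma~\ref{pbkproduct}, and $v$ collapses to the known culf property of the décalage map---in contrast with the double Segal and stability statements, which do call on the fibration assumptions on $f$.
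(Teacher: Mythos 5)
Your proof is correct and takes essentially the same route as the paper: both arguments reduce the culf condition to naturality squares that never involve the abacus-modified (inert) top face, then recognise the squares for $u$ as product squares handled by Lemma~\ref{pbkproduct}, and identify $v$ as $\id_{X_0}\times d_\bot$ with $d_\bot$ the culf décalage map of the decomposition space $Y$. The only cosmetic difference is that the paper invokes \cite[Lemma 4.3]{GKT1} to check a single square for each augmentation map, whereas you verify cartesianness on all generating active maps (inner faces and degeneracies) and paste.
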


\begin{proof}
Note that $u$ is a simplicial map by Lemma~\ref{usimplicial} and that $v$ is an augmentation map by Lemma~\ref{vaugmentation}.
It is enough to check the two following squares are pullbacks \cite[Lemma 4.3]{GKT1}:
\begin{center}
    \begin{tikzcd}
        X_{1} \times Y_0  & \tilde{B}_{1,0} \ar[l, "u"'] \\
        X_2 \times Y_0 \ar[u, "e_1 \times \id_{Y_0}"] & \tilde{B}_{2,0}, \ar[u, "\underline{e}_1"'] \ar[l, "u"]
    \end{tikzcd}
    \qquad
    \begin{tikzcd}
       X_0 \times  Y_{1}  & X_0 \times Y_{2} \ar[l, "d_1"'] \\
        \tilde{B}_{0,1} \ar[u, "v"] & \tilde{B}_{0,2}. \ar[u, "v"'] \ar[l, "\underline{d}_1"]
    \end{tikzcd}
\end{center}
By definition, the left-hand one is the following square
\begin{center}
    \begin{tikzcd}[sep=large]
        X_{1} \times Y_0 & X_1 \times Y_1 \ar[l, "\id_{X_1} \times d_1"'] \\
        X_2 \times Y_0 \ar[u, "e_1 \times \id_{Y_0}"] & X_2 \times Y_1, \ar[u, "e_1 \times \id_{Y_1}"'] \ar[l, "\id_{X_2} \times d_1"]
    \end{tikzcd}
\end{center}
which is a pullback by Lemma~\ref{pbkproduct}.
The right-hand square is a pullback since the décalage map of a decomposition space is culf.
\end{proof}

\begin{lem}\label{stable}
    Suppose $f_{i,\bullet}$ is a left fibration, that is cartesian on top face maps $d_\top$.
    Then the bisimplicial groupoid $\tilde{B}$ is stable.
\end{lem}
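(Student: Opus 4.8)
The plan is to follow the pattern of Lemma~\ref{doubleSegal}. By that lemma $\tilde B$ is already double Segal, so by \cite[Lemma 2.3.3]{Ca} stability reduces to checking that the two squares in the definition of stability are pullbacks: the one built from the bottom maps $\underline d_0,\underline e_0$, and the one built from $\underline d_1,\tilde{\underline e}_1$. The first square involves no modified map: in $B=X\Boxprod\Dec_\bot Y$ its horizontal maps are $\underline d_0=\id_{X_\bullet}\times d_1$ and its vertical maps are $\underline e_0=e_0\times\id_{Y_\bullet}$ (the bottom vertical face is untouched by the construction of Theorem~\ref{bisimplicialalt}), so it has exactly the product shape of Lemma~\ref{pbkproduct} and is a pullback. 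Hence all the content is in the second square, which uses the new top vertical face $\tilde{\underline e}_1=\tilde{\underline e}_\top$.

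For the second square I would recall $\tilde{\underline e}_\top=\underline d_0\,f_{0,j}$ and, drawing the square with its two copies of $\tilde{\underline e}_1$ as horizontal maps, split it as the horizontal composite
\begin{center}
\begin{tikzcd}
    \tilde{B}_{1,1} \ar[r, "f_{0,1}"] \ar[d, "\underline{d}_1"'] & \tilde{B}_{0,2} \ar[r, "\underline{d}_0"] \ar[d, "\underline{d}_2"'] & \tilde{B}_{0,1} \ar[d, "\underline{d}_1"] \\
    \tilde{B}_{1,0} \ar[r, "f_{0,0}"'] & \tilde{B}_{0,1} \ar[r, "\underline{d}_0"'] & \tilde{B}_{0,0},
\end{tikzcd}
\end{center}
whose outer rectangle is the desired square. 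The left square commutes by simpliciality of $f_{0,\bullet}$ (the first abacus-map axiom, which gives $\underline d_2 f_{0,1}=f_{0,0}\underline d_1$), and the right square commutes by the simplicial identity $d_1 d_3=d_2 d_1$ in $Y$.

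It then remains to see that each sub-square is a pullback. The left square is precisely the naturality square of $f_{0,\bullet}\colon B_{1,\bullet}\to\Dec_\bot(B_{0,\bullet})$ at the top horizontal face map (at level $1$ the top face of the row is $\underline d_1$, while the top face of the décalage is $\underline d_2$), so it is a pullback exactly by the hypothesis that $f_{i,\bullet}$ is a left fibration, i.e.\ cartesian on $\underline d_\top$. The right square lives in $X_0\times Y_\bullet$; discarding the $X_0$-factor it is the square on $Y_3,Y_2,Y_2,Y_1$ with the top face maps $d_3,d_2$ in one direction and the inner face maps $d_1$ in the other, which is the last of the four squares defining a decomposition space (the $d_\top$ one) applied to $Y$ with $n=0$ and $k=0$. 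Both sub-squares being pullbacks, so is the outer rectangle, and stability follows.

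I expect the only real obstacle to be bookkeeping rather than ideas: one must correctly track the index shift introduced by $\Dec_\bot$, in order to (a) recognise $\underline d_2$ as the top face of $\Dec_\bot(B_{0,\bullet})$ that makes the left square an instance of the left-fibration condition, and (b) match the right square to the decomposition-space axiom on $Y$. The latter is the point where this argument genuinely differs from Lemma~\ref{doubleSegal}: there the auxiliary square had product shape and fell under Lemma~\ref{pbkproduct}, whereas here both legs of the right square act on the $Y$-factor, so it is the completeness/decomposition condition on $Y$ — not a mere product decomposition — that is doing the work.
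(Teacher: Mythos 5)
Your proof is correct and takes essentially the same route as the paper's: the paper also dispatches the $(\underline d_0,\underline e_0)$-square by Lemma~\ref{pbkproduct}, and factors the $(\underline d_\top,\tilde{\underline e}_\top)$-square through $f_{0,j}$ followed by $\underline d_0$ (your diagram is just the transpose of the paper's), with the two sub-squares being pullbacks by the left-fibration hypothesis and the decomposition-space property of $Y$, respectively. The only difference is cosmetic: you pin down the exact active-inert axiom used (the $d_\top$-square with $n=k=0$, after discarding the $X_0$-factor), where the paper simply says the square is a pullback since $Y$ is a decomposition space.
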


\begin{proof}
Since the bisimplicial groupoid $\tilde{B}$ is a double Segal space by Lemma \ref{doubleSegal}, the stability can be established checking only the two following squares are pullbacks \cite[Lemma 2.3.3]{Ca}: 
\begin{center}
    \begin{tikzcd}
        \tilde{B}_{0,0} & \tilde{B}_{0,1} \ar[l, "\underline{d}_0"'] \\
        \tilde{B}_{1,0} \ar[u, "\underline{e}_0"]  & \tilde{B}_{1,1}, \ar[u, "\underline{e}_0"']  \ar[l, "\underline{d}_0"]
    \end{tikzcd}
    \qquad
        \begin{tikzcd}
        \tilde{B}_{0,0} & \tilde{B}_{0,1} \ar[l, "\underline{d}_\top"'] \\
        \tilde{B}_{1,0} \ar[u, "\tilde{\underline{e}}_\top"]  & \tilde{B}_{1,1} \ar[u, "\tilde{\underline{e}}_\top"']  \ar[l, "\underline{d}_\top"].
    \end{tikzcd}
\end{center}
The first square is a pullback by Lemma~\ref{pbkproduct}.
The second square is the outer rectangle of the following diagram:\begin{center}
    \begin{tikzcd}
        \tilde{B}_{0,0} & \tilde{B}_{0,1} \ar[l, "\underline{d}_\top"'] \\
        \tilde{B}_{0,1} \ar[u, "\underline{d}_0"] & \tilde{B}_{0,2} \ar[u, "\underline{d}_0"'] \ar[l, "\underline{d}_\top"']\\
        \tilde{B}_{1,0} \ar[u, "f_{0,0}"]  & \tilde{B}_{1,1} \ar[u, "f_{0,1}"']  \ar[l, "\underline{d}_\top"].
    \end{tikzcd}
\end{center}
The top square is a pullback since $Y$ is a decomposition space. 
The bottom one is a pullback because $f_{i,\bullet}$ is a left fibration.
\end{proof}


\subsection*{Layered sets and posets}\label{layeredsetsandposets}
We now specialise to the situation where $X = \ds{I}$ the decomposition space of layered finite sets and $Y = \ds{C}$ the decomposition space of layered finite posets. 


\begin{lem}\label{abacusposets}
    The simplicial groupoid $B:= \ds{I} \Boxprod \Dec_\bot \ds{C}$ has an augmented abacus map with $f_{i,j}: B_{i+1,j} \to B_{i, j+1}$ given by moving the last layer of the set into a new first layer of the poset.
\end{lem}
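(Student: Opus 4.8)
The plan is to write down the three families of maps that make up $f$ and then verify the defining conditions by unwinding the elementary descriptions of the face and degeneracy maps of $\ds I$ and $\ds C$ (joining layers, deleting an outer layer, inserting an empty layer). Denote an object of $B_{i,j}=\ds I_i\times\ds C_{j+1}$ by a pair $(S,P)$, with $S$ an $i$-layered finite set and $P$ a $(j+1)$-layered finite poset. I set $f_{i,j}(S,P)=(S',P')$, where $S'$ is $S$ with its top layer $S_{i+1}$ deleted and $P'$ is $P$ with $S_{i+1}$ prepended as a new bottom layer, regarded as a discrete poset disjoint from (incomparable to) the rest of $P$. This manifestly respects layer-preserving bijections, so $f$ is a functor and the groupoid level is automatic. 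On the two augmentations the recipe degenerates: since $\ds C_0=\{\varnothing\}$, the map $f_{i,-1}\colon \ds I_{i+1}\to \ds I_i\times\ds C_1$ sends $S$ to $(S',S_{i+1})$, turning the top set-layer into a one-layered discrete poset, whereas since $\ds I_0=\{\varnothing\}$ there is no layer available to move and $f_{-1,j}$ is forced to be the identity of $\ds C_{j+1}$.

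Next I would check the three conditions defining an abacus map. Simpliciality of $f_{i,\bullet}\colon B_{i+1,\bullet}\to\Dec_\bot(B_{i,\bullet})$ in the horizontal (poset) direction holds because, after the décalage shift, every horizontal operation acts only on the layers inherited from $P$, i.e.\ on the layers lying strictly above the prepended bottom layer, and so commutes with the insertion of the fixed layer $S_{i+1}$. In the vertical (set) direction, $f_{\bullet,j}\colon\Dec_\top(B_{\bullet,j})\to B_{\bullet,j+1}$ commutes with every degeneracy and with every face map except the top one: a face joining or deleting layers strictly below the top set-layer is untouched by first removing that layer, but the top face joins the top two set-layers and therefore interferes with the layer being transported, which is exactly the single permitted exception. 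The crucial identity is the third, $\underline d_\bot\, f_{i,j}\, \underline t_\top=\id$: the degeneracy $\underline t_\top$ inserts an empty top layer in $S$, the map $f_{i,j}$ transports that empty layer to an empty bottom layer of $P$, and $\underline d_\bot=\id\times d_1$ joins it back into $P$, returning $(S,P)$ unchanged. This cancellation is precisely what motivates taking $\Dec_\bot\ds C$ as the right-hand factor, since the décalage is what provides the room for the empty layer.

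I would then treat the augmented conditions. For the left-augmented diagram I must verify the square $d_1\, f_{i,0}=f_{i,-1}\, u$ together with $u\, f_{i,-1}=e_\top$; semantically, both composites forget the poset datum $P\in\ds C_1$ and transport the top set-layer (in either order), and $u\, f_{i,-1}=e_\top$ records that deleting the one-layered poset one has just created is the same as deleting the top set-layer, namely $e_\top$. For the right-augmented diagram, since $f_{-1,j}=\id$ and the map $v$ is the décalage map $\id\times d_0$ deleting the freshly created bottom poset-layer, the identities $v\, f_{0,j}=f_{-1,j}\, e_0$ and $v=d_0\, f_{-1,0}$ both reduce to the observation that transporting the (unique) set-layer and then deleting it again recovers the starting poset. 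Each of these is a one-line computation with finite sets and posets.

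The step I expect to require the most care is the index bookkeeping, since two décalages are superimposed: the one built into $B=\ds I\Boxprod\Dec_\bot\ds C$ and the one appearing in the row décalage $\Dec_\bot(B_{i,\bullet})$ of the abacus axioms. In particular one must identify the augmentation of $\Dec_\bot(B_{i,\bullet})$ correctly: it is the \emph{top}-layer-deleting face $\underline d_1=\id\times d_2$ (the face coequalising the two faces of the décalage), not the bottom one, and it is this fact that makes the left-augmented square commute. Once the indices are aligned, the only conceptual content is the abacus identity of the second paragraph and the verification that $f$ commutes with all non-top vertical operations while genuinely failing to commute with the single top vertical face, as the definition demands; everything else is routine.
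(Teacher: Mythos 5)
Your proposal is correct and follows essentially the same route as the paper: you define $f_{i,j}$ exactly as the paper does (delete the top set-layer and prepend it to the poset as a discrete, incomparable bottom layer, with the induced augmented components $f_{i,-1}$ and $f_{-1,j}$), and then verify the abacus and augmentation axioms, which the paper simply declares straightforward. Your explicit checks — in particular identifying the augmentation of $\Dec_\bot(B_{i,\bullet})$ as the top face $\underline{d}_1 = \id \times d_2$ and the cancellation $\underline{d}_\bot f_{i,j} \underline{t}_\top = \id$ via the empty layer — are all accurate and consistent with the paper's construction.
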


\begin{proof}
Remark that the augmentation column is $\ds I$ and the augmentation row is $\ds C$.
The groupoid $B_{i,j}$ consists of pairs of layerings $(S \to \underline{i},P \to \underline{j{+}1})$ where $S$ is a finite set, and $P$ is a finite poset.
The map $f_{i,j}$ sends $(S \xra{a} \underline{i},P \xra{b} \underline{j{+}1})$ to $(a^{-1}(\underline{i{-}1}) \xra{} \underline{i{-}1},(a^{-1}(i) + P) \xra{} \underline{1{+}j{+}1})$.
The following picture represents the map $f_{2,1}$ sending an element in the groupoid $B_{3,1}$ to an element in the groupoid $B_{2,2}$, where layers are numbered from bottom to top.
\begin{center}
    \begin{tikzpicture}[x=0.7cm,y=0.7cm]
\draw [shift={(0,0)}]  plot[domain=0:3.141592653589793,variable=\t]({1*1*cos(\t r)+0*1*sin(\t r)},{0*1*cos(\t r)+1*1*sin(\t r)});
\draw  (-1,0)-- (1,0);
\draw  (-0.33507039454439097,0.7155329229551461)-- (-0.060062480287099776,0.3216026673974046);
\draw  (-0.060062480287099776,0.3216026673974046)-- (0.16291690965124442,0.7303982156177022);
\draw  (0.37103100692703234,0.2547088504159014)-- (0.6980674455032706,0.5520147036670269);
\draw  (-1,0)-- (-1,-1);
\draw  (1,-1)-- (-1,-1);
\draw  (1,-1)-- (1,0);
\draw  (0.08733605190851508,-0.36490821962635805)-- (-0.060062480287099776,0.3216026673974046);
\draw  (0.8102764549536944,-0.4430639388744853)-- (0.19480016587469037,-0.7459173509609783);
\draw  (-1,-1.5)-- (1,-1.5);
\draw  (1,-1.5)-- (1,-2.5);
\draw  (1,-2.5)-- (1,-3.5);
\draw  (1,-3.5)-- (-1,-3.5);
\draw  (-1,-3.5)-- (-1,-2.5);
\draw  (-1,-2.5)-- (-1,-1.5);
\draw  (-1,-2.5)-- (1,-2.5);
\draw [shift={(0,-3.5)}]  plot[domain=3.141592653589793:6.283185307179586,variable=\t]({1*1*cos(\t r)+0*1*sin(\t r)},{0*1*cos(\t r)+1*1*sin(\t r)});
\draw [|->] (2,-2) -- (3,-2);
\draw  (4.677222517052907,0.7185775139350105)-- (4.952230431310198,0.3246472583772689);
\draw  (4.952230431310198,0.3246472583772689)-- (5.175209821248543,0.7334428065975666);
\draw  (5.38332391852433,0.25775344139576567)-- (5.710360357100568,0.5550592946468911);
\draw  (5.099628963505813,-0.3618636286464939)-- (4.952230431310198,0.3246472583772689);
\draw  (5.822569366550993,-0.4400193478946205)-- (5.207093077471988,-0.7428727599811142);
\draw  (6.012292911597298,-2.496955409020136)-- (6.012292911597298,-3.4969554090201367);
\draw  (6.012292911597298,-3.4969554090201367)-- (4.0122929115972985,-3.4969554090201367);
\draw  (4.0122929115972985,-3.4969554090201367)-- (4.0122929115972985,-2.496955409020136);
\draw  (4.0122929115972985,-2.496955409020136)-- (6.012292911597298,-2.496955409020136);
\draw [shift={(5.012292911597298,-3.4969554090201367)}]  plot[domain=3.141592653589793:6.283185307179586,variable=\t]({1*1*cos(\t r)+0*1*sin(\t r)},{0*1*cos(\t r)+1*1*sin(\t r)});
\draw  (4,-1)-- (4,-2);
\draw  (6,-2)-- (4,-2);
\draw  (6,-2)-- (6,-1);
\draw  (6,-1)-- (4,-1);
\draw  (4,-1)-- (4,0);
\draw  (4,0)-- (6,0);
\draw  (6,0)-- (6,-1);
\draw [shift={(5,0)}]  plot[domain=0:3.141592653589793,variable=\t]({1*1*cos(\t r)+0*1*sin(\t r)},{0*1*cos(\t r)+1*1*sin(\t r)});
\draw [fill=gray] (-0.33507039454439097,0.7155329229551461) circle (1.5pt);
\draw [fill=gray] (-0.060062480287099776,0.3216026673974046) circle (1.5pt);
\draw [fill=gray] (0.16291690965124442,0.7303982156177022) circle (1.5pt);
\draw [fill=gray] (0.37103100692703234,0.2547088504159014) circle (1.5pt);
\draw [fill=gray] (0.6980674455032706,0.5520147036670269) circle (1.5pt);
\draw [fill=gray] (-0.714135357439576,0.27700678940973567) circle (1.5pt);
\draw [fill=gray] (-0.6160654213246324,-0.6189143071827722) circle (1.5pt);
\draw [fill=gray] (0.08733605190851508,-0.36490821962635805) circle (1.5pt);
\draw [fill=gray] (0.8102764549536944,-0.4430639388744853) circle (1.5pt);
\draw [fill=gray] (0.19480016587469037,-0.7459173509609783) circle (1.5pt);
\draw [fill=gray] (-0.7893105481025529,-1.9579677922599437) circle (1.5pt);
\draw [fill=gray] (-0.05451315709960766,-2.2096107343842393) circle (1.5pt);
\draw [fill=gray] (0.6601527985333938,-1.9076392038350845) circle (1.5pt);
\draw [fill=gray] (0.39844413872412565,-3.125591043716676) circle (1.5pt);
\draw [fill=gray] (-0.37661612301870695,-3.0551310199218733) circle (1.5pt);
\draw [fill=gray] (-0.507470452923341,-3.850322717034648) circle (1.5pt);
\draw [fill=gray] (-0.30615609922390397,-4.212688553693634) circle (1.5pt);
\draw [fill=gray] (0.11660404354491381,-3.6389426456502396) circle (1.5pt);
\draw [fill=gray] (0.3682469856692101,-3.9711113292543097) circle (1.5pt);
\draw [fill=gray] (4.677222517052907,0.7185775139350105) circle (1.5pt);
\draw [fill=gray] (4.952230431310198,0.3246472583772689) circle (1.5pt);
\draw [fill=gray] (5.175209821248543,0.7334428065975666) circle (1.5pt);
\draw [fill=gray] (5.38332391852433,0.25775344139576567) circle (1.5pt);
\draw [fill=gray] (5.710360357100568,0.5550592946468911) circle (1.5pt);
\draw [fill=gray] (4.298157554157722,0.2800513803895999) circle (1.5pt);
\draw [fill=gray] (4.396227490272666,-0.6158697162029081) circle (1.5pt);
\draw [fill=gray] (5.099628963505813,-0.3618636286464939) circle (1.5pt);
\draw [fill=gray] (5.822569366550993,-0.4400193478946205) circle (1.5pt);
\draw [fill=gray] (5.207093077471988,-0.7428727599811142) circle (1.5pt);
\draw [fill=gray] (4.215373413727493,-1.4603414664086674) circle (1.5pt);
\draw [fill=gray] (4.950170804730438,-1.7119844085329605) circle (1.5pt);
\draw [fill=gray] (5.664836760363439,-1.410012877983808) circle (1.5pt);
\draw [fill=gray] (5.410737050321424,-3.1225464527368114) circle (1.5pt);
\draw [fill=gray] (4.635676788578591,-3.052086428942009) circle (1.5pt);
\draw [fill=gray] (4.504822458673957,-3.8472781260547846) circle (1.5pt);
\draw [fill=gray] (4.706136812373394,-4.209643962713772) circle (1.5pt);
\draw [fill=gray] (5.128896955142212,-3.635898054670376) circle (1.5pt);
\draw [fill=gray] (5.380539897266508,-3.968066738274448) circle (1.5pt);
\end{tikzpicture}
\end{center}
The verification of the abacus map axioms is straightforward.
\end{proof}

\begin{lem}\label{abacusposetsfib}
    The map $f_{\bullet,j}$ is a right fibration and the map $f_{i,\bullet}$ is a left fibration.
\end{lem}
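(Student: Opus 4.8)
The plan is to read each of the two assertions as the statement that a single naturality square of the abacus map $f$ is a pullback of groupoids, and then to check each square by hand from the explicit recipe of Lemma~\ref{abacusposets} (strip off the top layer of the set and glue it on below the poset as a new bottom layer by ordinal sum). Since every object in sight is a layered set together with a layered poset, I would verify ``pullback'' by producing an inverse to the canonical comparison functor: from a compatible pair equipped with its gluing isomorphism I reconstruct an object upstairs, and then check that this reconstruction is mutually inverse to $f$ up to coherent isomorphism. The set-factor and poset-factor decouple almost entirely — the purely product-like part is absorbed by Lemma~\ref{pbkproduct}, and the only genuine interaction is the ordinal sum that the abacus map introduces.

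For the right fibration I would write out the square expressing that $f_{\bullet,j}$ is cartesian on the bottom vertical face $\underline e_\bot = e_0\times\id$:
\begin{center}
\begin{tikzcd}
B_{i+1,j} \ar[r, "f_{i,j}"] \ar[d, "\underline e_0"'] & B_{i,j+1} \ar[d, "\underline e_0"] \\
B_{i,j} \ar[r, "f_{i-1,j}"'] & B_{i-1,j+1}.
\end{tikzcd}
\end{center}
The decisive observation is that $\underline e_0$ deletes only the bottom layer of the \emph{set} and leaves the poset untouched. Hence an object of the pullback carries an isomorphism of the \emph{full} posets, of the form $A_{\top}+M\cong N$, which already exhibits $N$ as an ordinal sum whose bottom layer is the moved set-layer; combined with the matching of the truncated sets, this is exactly the data of an object of $B_{i+1,j}$. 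One checks the reconstruction is mutually inverse to $f_{i,j}$, so the square is a pullback.

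For the left fibration I would write the square expressing that $f_{i,\bullet}$ is cartesian on the top horizontal face $\underline d_\top=\id\times d_\top$:
\begin{center}
\begin{tikzcd}
B_{i+1,j} \ar[r, "f_{i,j}"] \ar[d, "\underline d_\top"'] & B_{i,j+1} \ar[d, "\underline d_\top"] \\
B_{i+1,j-1} \ar[r, "f_{i,j-1}"'] & B_{i,j}.
\end{tikzcd}
\end{center}
Now the tested face deletes the top layer of the \emph{poset}, so the gluing isomorphism only constrains the truncated poset, $N|_{\mathrm{minus\ top}}\cong A_{\top}+M$, while the full poset $N$ is supplied separately by the $B_{i,j+1}$-coordinate. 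The reconstruction takes $P$ to be $N$ with its bottom layer stripped off and checks that $P$ restricts to $M$ after deleting its own top layer. I expect the main obstacle to be precisely the essential surjectivity here: one must show that the new bottom layer lies below \emph{all} of $N$, including the top layer that the gluing isomorphism does not directly see, so that $N$ genuinely is the ordinal sum $A_{\top}+P$. I would attack this by unwinding the explicit abacus map — the set-layer is attached as a set of global minima, so its relation to each higher layer is not free but forced, and deleting the top layer of an ordinal sum $S+P$ returns the ordinal sum $S+(P\setminus\top)$ — and then checking that the compatibility of the gluing isomorphism with these identifications pins $N$ down as an ordinal sum; this is the delicate step that must be carried out carefully rather than waved through. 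Fully faithfulness is afterwards a routine comparison of automorphism groups of the layered sets and posets involved.
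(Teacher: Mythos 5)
Your overall method is the paper's method: the paper proves Lemma~\ref{abacusposetsfib} exactly by taking the two naturality squares you write down and checking them fibrewise, and for the right-fibration square your reconstruction is the same computation in different packaging (the paper phrases it as: both fibres are the groupoid of triples $(A,\,P'\xrightarrow{\sim}P,\,S'\xrightarrow{\sim}S)$ with $A$ a free finite set, which is your observation that $\underline e_\bot$ never touches the poset factor, so the gluing isomorphism pins the full poset and only the new bottom set-layer is free). But you have misread the abacus map: the sum $a^{-1}(i)+P$ in Lemma~\ref{abacusposets} is a \emph{disjoint union} --- the moved set-layer becomes a layer of isolated points, not a layer of global minima adjoined by ordinal sum. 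This is what the figure shows (no edges out of the moved layer), and it is forced by the surrounding results: the extra degeneracy $t_{\top+1}$ moves the \emph{discrete part} of the bottom layer and must be a section of $e_\top$, which fails under the ordinal-sum reading; and Theorem~\ref{formula} lands on a Möbius function supported on \emph{discrete} posets precisely because the coactions split off isolated points. For the right-fibration half this slip is structurally harmless (replace $+$ by $\sqcup$ throughout and your argument survives).

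For the left-fibration half the step you yourself flag as delicate is a genuine gap, and the resolution you sketch is a non sequitur. A layering is merely a monotone map, so it forbids downward relations across layers but forces nothing between the bottom layer and the top layer; and the gluing isomorphism in your pullback lives over $d_\top$, i.e.\ it identifies only $N$ \emph{minus its top layer} with the glued object, so it can never see relations into the deleted layer. ``The set-layer is attached as global minima, so its relation to each higher layer is forced'' is true of objects already in the image of $f$, but essential surjectivity concerns exactly the objects not yet known to be in the image, and the compatibility of the gluing isomorphism cannot ``pin $N$ down''. Concretely, take the upstairs object $(\{x\},\{m\})$ in low degree and let $N$ be the $3$-layered poset on $\{x,m,t\}$ with layering $x\,|\,m\,|\,t$ and single relation $x<t$ (in your ordinal-sum reading, take the single relation $x<m$ instead): then $d_\top N$ agrees with the glued object, so $N$ defines a point of your pullback, yet $N$ is not of the form $A\sqcup P'$ (resp.\ $A+P'$) with $A$ its bottom layer, since $x$ fails to be isolated (resp.\ fails to lie below $t$); the reconstruction you call for does not exist for such $N$. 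Your instinct about where the difficulty sits is accurate --- indeed the paper's own proof is extremely terse at exactly this point, simply asserting that the second fibre too consists of pairs $(S'\simeq S,\ P'$ with top layer deleted $\simeq P)$, an identification that silently absorbs the very constraint at issue --- but what you offer in its place is an appeal to a property of the image rather than an argument, so your proof of the left-fibration statement does not go through as written.
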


\begin{proof}
        The map $f_{\bullet,j}$ is a right fibration: for each $(S,P) \in B_{n,j}$, the fibres of $e_0: B_{n+1,j} \to B_{n,j}$ along $(S,P)$ and $e_0: B_{n,j+1} \to B_{n-1,j+1}$ along $f_{n-1,j}(S,P)$ consist both of triples $(A, P' \xra{\alpha} P, S' \xra{\beta} S)$, where $A$ is a finite set and $\alpha$ and $\beta$ are monotone bijections:
\begin{center}
    \begin{tikzcd}
        F_{(S,P)} \ar[r] \ar[d] & B_{n+1,j} \ar[d, "e_0"] \ar[r, "f_{n,j}"] & B_{n,j+1} \ar[d, "e_0"] \\
        1 \ar[r, "\name{(S,P)}"'] & B_{n,j} \ar[r, "f_{n-1,j}"'] & B_{n-1,j+1}.
    \end{tikzcd}
\end{center}
 The map $f_{i,\bullet}$ is a left fibration: for each $(S,P) \in B_{i+1,j}$, the fibres of $d_\top : B_{i+1,j+1} \to B_{i+1,j}$ along $(S,P)$, and of $d_\top : B_{i,j+2} \to B_{i,j+1}$ along $f_{i, j+1}(S,P)$ are equivalent since they consist both of pairs $(S' \to \underline{i{+}1}, P' \xra{b} \underline{j{+}2})$, such that $S'\eq S$, and $b^{-1}(\{0, \dots, j+1\}) \eq P$.
\end{proof}

    The main object of interest will be the augmented bisimplicial groupoid obtained by applying Theorem~\ref{bisimplicialalt} to $B$.  We denote it $\ds{B}$.  By unpacking the general construction we get the following explicit description: the groupoid $\ds{B}_{i,j}$ consists of pairs of layerings $(S \to \underline{i},P \to \underline{j{+}1})$ where $S$ is a finite set, and $P$ is a finite poset.
For example, $\ds{B}_{0,0}$ is the groupoid of $1$-layered finite posets.
The horizontal face maps (taking place only on the $(j{+}1)$-layered finite poset part) are given by:
\begin{itemize}
    \item $d_k:\ds{B}_{i,j} \to \ds{B}_{i,j-1}$ joins the layers $(k{+}1)$ and $(k{+}2)$ of the poset, for all $j >0$ and $0 \le k \le j-1$;
    \item  $d_{\top} = d_{j}: \ds{B}_{i,j} \to \ds{B}_{i,j-1}$ deletes the last layer.
\end{itemize}
Horizontal degeneracy maps are given by inserting empty layers:
$s_k: \ds{B}_{i,j} \to \ds{B}_{i, j+1}$ inserts an empty $(k{+}2)$nd layer in the poset, for all $j\ge 0$ and $0\le k \le j$.

The vertical face maps are given by:
\begin{itemize}
    \item $e_\bot = e_0: \ds{B}_{i,j} \to \ds{B}_{i-1, j}$ deletes the first layer of the set, for all $i > 0$;
    \item $e_k: \ds{B}_{i,j} \to \ds{B}_{i-1, j}$ joins the layers $k$ and $k{+}1$ of the set, for all $0 < k < i$.
\end{itemize}
According to the modification, the top vertical face map is given by $e_\top = d_0 \circ f_{i,j}$.
\begin{itemize}
    \item $e_\top = e_i: \ds{B}_{i,j} \to \ds{B}_{i-1, j}$ joins the last layer of the set and the first layer of the poset into the first layer of the poset.
\end{itemize}
In this way, the top vertical map keeps some information about the last layer of the set, instead of just throwing it away.
Vertical degeneracy maps are given by inserting empty layers:  $t_k: \ds{B}_{i,j} \to \ds{B}_{i+1, j}$ inserts an empty $(k{+}1)$st layer to the set, for all $0 \le k \le i$.
The augmentation maps are  
$u: \ds{B}_{i,0} \to \ds{I}_i$ deleting the whole $1$-layered poset and
$v: \ds{B}_{0, j} \to \ds{C}_j$ deleting the first layer of the poset.
It should be noted that the row $\ds{B}_{0,\bullet}$ is the lower décalage of $\ds{C}$, that $v$ is the décalage map given by the original $d_0$, and that $u$ is the augmentation map that décalage always have.

\begin{prop}
    With augmentations maps $u$ and $v$, the simplicial groupoid $\ds{B}$ is a bicomodule configuration.
\end{prop}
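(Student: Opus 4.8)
The plan is to present this as a direct application of Proposition~\ref{bicomoduleconf}, together with one small verification about the augmentation edges. Write $B := \ds{I} \Boxprod \Dec_\bot \ds{C}$, so that by construction $\ds{B} = \tilde{B}$ is the modification of $B$ produced by Theorem~\ref{bisimplicialalt}. Lemma~\ref{abacusposets} provides an augmented abacus map $f$ on $B$, and Lemma~\ref{abacusposetsfib} shows that each $f_{\bullet,j}$ is a right fibration and each $f_{i,\bullet}$ is a left fibration. These are precisely the hypotheses of Proposition~\ref{bicomoduleconf}, so I would invoke it directly to conclude that $\ds{B}$ is an augmented double Segal space which is stable and whose augmentation maps $u$ and $v$ are culf. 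This disposes of all the conditions in the definition of a bicomodule configuration except the last.

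The remaining point is that the augmentation column $\ds{B}_{\bullet,-1}$ and the augmentation row $\ds{B}_{-1,\bullet}$ must be decomposition spaces. Here I would identify them explicitly: as recorded in the description preceding the statement (and already in the proof of Lemma~\ref{abacusposets}), the augmentation column is $\ds{I}$ and the augmentation row is $\ds{C}$, using $\ds{I}_0 \simeq 1 \simeq \ds{C}_0$ so that the products $\ds{I} \times \ds{C}_0$ and $\ds{I}_0 \times \ds{C}$ collapse to $\ds{I}$ and $\ds{C}$. The one subtlety is that the abacus modification of Theorem~\ref{bisimplicialalt} replaces the vertical top face maps, so I must check that it does not disturb these two augmentation edges. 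For the row $\ds{B}_{-1,\bullet}$ this is immediate, since its simplicial structure is purely horizontal and the modification touches only vertical maps. For the column $\ds{B}_{\bullet,-1}$ the new top face map is $\tilde{\underline{e}}_\top = \underline{d}_\bot f_{i,-1}$, and the left-augmented relation $u f_{i,-1} = e_\top$ (built into the notion of a left-augmented abacus map) shows that this coincides with the original $e_\top$ inherited from $\ds{I}$.

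With the identifications in hand, the conclusion is immediate: $\ds{I}$ is a (Segal, hence) decomposition space and $\ds{C}$ is a decomposition space, both recalled in Section~\ref{sec:preliminaries}, so every clause in the definition of a bicomodule configuration holds. The only step beyond citing Proposition~\ref{bicomoduleconf} is the observation that the modification fixes the augmentation edges, and this is the place where I would be most careful; but it reduces entirely to the single identity $u f_{i,-1} = e_\top$ already part of the abacus map data, so no real obstacle arises.
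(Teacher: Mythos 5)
Your proposal is correct and follows essentially the same route as the paper, whose proof simply cites Lemmas~\ref{abacusposets}--\ref{abacusposetsfib} and Proposition~\ref{bicomoduleconf}. The only addition is that you spell out the remaining clause --- that the augmentation column and row are the decomposition spaces $\ds{I}$ and $\ds{C}$ and are untouched by the abacus modification (via $u f_{i,-1} = e_\top$) --- a verification the paper leaves implicit in the remark opening the proof of Lemma~\ref{abacusposets}, and your care on this point is sound.
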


\begin{proof}
 The result follows from Lemmas~\ref{abacusposets}-\ref{abacusposetsfib} and Proposition~\ref{bicomoduleconf}.
\end{proof}

\section{Möbius functions}
\subsection*{Möbius function of the decomposition space of finite posets}\label{sec:mobiusfunctions}

The augmented bisimplicial groupoid $\ds{B}$ of layered sets and posets is a bicomodule configuration between $\ds{I}$ and $\ds{C}$. By Theorem~2.4.1 of \cite{Ca}, the spans 
\[\ds{B}_{0,0}  \xleftarrow{e_1} \ds{B}_{1,0} 
    \xrightarrow{(u,e_0)} \ds{I}_{1} \times \ds{B}_{0,0}
\]
and
\[\ds{B}_{0,0}  \xleftarrow{d_0} \ds{B}_{0,1} 
    \xrightarrow{(d_1,v)} \ds{B}_{0,0} \times \ds{C}_{1}
\]
induce on $\Grpd_{/\ds{B}_{0,0}}$ the structure of a bicomodule over $\Grpd_{/\ds{I}_1}$ and $\Grpd_{/\ds{C}_1}$.

In order to be in position to apply the generalised Rota formula from Theorem~\ref{genRota}, we first need more structure to define Möbius functions, and then some finiteness conditions to take homotopy cardinality.

\begin{lem}\label{rightcomplete}
    The comodule configuration $\ds{B}_{0, \bullet} \to \ds{C}$ is complete.
\end{lem}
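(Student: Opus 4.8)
The plan is to unwind the definition of completeness for the right pointed comodule configuration $v : \ds{B}_{0,\bullet} \to \ds{C}$ and reduce it to a statement about a single family of degeneracy maps of $\ds{C}$. Recall from the explicit description that $\ds{B}_{0,\bullet}$ is the lower décalage $\Dec_\bot \ds{C}$, with $\ds{B}_{0,j} \simeq \ds{C}_{j+1}$, and that $v$ is the décalage map induced by the original $d_0$. Completeness of the comodule configuration asks precisely that the new bottom sections $s_{-1} : \ds{B}_{0,n-1} \to \ds{B}_{0,n}$ be monomorphisms. Since the extra bottom degeneracy of the box product is $\underline{s}_{-1} = \id_{\ds{I}_0} \times s_0$, under the identification $\ds{B}_{0,n} \simeq \ds{C}_{n+1}$ this section is exactly the original degeneracy $s_0 : \ds{C}_n \to \ds{C}_{n+1}$ of $\ds{C}$ that was deleted in forming the décalage. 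So the lemma reduces to showing that $s_0 : \ds{C}_n \to \ds{C}_{n+1}$ is a monomorphism for every $n \ge 0$.

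For this I would use that $\ds{C}$ is a complete decomposition space, which by definition supplies the base case $n = 0$, namely that $s_0 : \ds{C}_0 \to \ds{C}_1$ is a monomorphism. To propagate this upward I would invoke the third of the defining pullback squares of a decomposition space (the one whose horizontal maps are $s_k$ and whose vertical maps are $d_\top$): taking $k = 0$ there, it exhibits $s_0 : \ds{C}_{n+1} \to \ds{C}_{n+2}$ as a base change of $s_0 : \ds{C}_n \to \ds{C}_{n+1}$ along $d_\top$. Since monomorphisms of groupoids are stable under pullback, an immediate induction then gives that $s_0 : \ds{C}_n \to \ds{C}_{n+1}$ is a monomorphism for all $n$, which is exactly completeness of the comodule configuration. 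Equivalently, one may simply cite the general fact that every degeneracy map of a complete decomposition space is a monomorphism.

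Alternatively, and perhaps more in keeping with the elementary flavour of these verifications, one can argue directly on $\ds{C}$: the map $s_0$ inserts an empty first layer, so an object $P \to \underline{n{+}1}$ of $\ds{C}_{n+1}$ lies in the essential image of $s_0$ exactly when its first layer is empty, in which case the corresponding object of $\ds{C}_n$ is determined up to canonical isomorphism by deleting that empty layer, with matching automorphism groups; hence the homotopy fibres of $s_0$ are empty or contractible, i.e.\ $s_0$ is a monomorphism. I do not expect any genuine obstacle here. The only point requiring care is the first step, namely correctly identifying the extra section $s_{-1}$ of the comodule with the deleted degeneracy $s_0$ of $\ds{C}$, together with the observation that completeness of $\ds{C}$ as stated concerns only the bottom degeneracy $s_0 : \ds{C}_0 \to \ds{C}_1$; the actual content is therefore the propagation to all levels via the pullback square (or the appeal to the degeneracy-monomorphism fact).
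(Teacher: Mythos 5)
Your proof is correct and follows essentially the same route as the paper's: both identify the extra section $s_{-1}$ with $\id_{\ds{I}_0}\times s_0$, the degeneracy deleted in forming $\Dec_\bot\ds{C}$, and deduce that it is a monomorphism from completeness of $\ds{C}$. The only difference is that you spell out why completeness (stated only for $s_0:\ds{C}_0\to\ds{C}_1$) propagates to $s_0:\ds{C}_n\to\ds{C}_{n+1}$ for all $n$ via the decomposition-space pullback squares --- the standard fact that all degeneracies of a complete decomposition space are monomorphisms, see \cite[\S 2]{GKT2} --- which the paper's one-line proof uses implicitly.
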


\begin{proof}
The right pointing is given by the extra degeneracy map $s_{-1} = \id_{\ds{I}_i} \times s_{0}$ that the décalage always have, and is thus a section to $d_0$.
It is also a monomorphism since $s_0$ is a monomorphism (the decomposition space $\ds{C}$ of
layered finite posets is complete).
\end{proof}

    The right completeness of Lemma~\ref{rightcomplete} holds for any bisimplicial groupoid of the form $X \Boxprod \Dec_\bot Y$ such that that $X$ is a Segal space and $Y$ a complete decomposition space. In contrast, the left completeness of the following lemma requires further structure, namely the extra top degeneracy map which we can define in this specific example (and which is a section to the new top face map but not to the old).
  

\begin{lem}
    The comodule configuration $\ds{B}_{\bullet,0} \to \ds{I}$ is complete.
\end{lem}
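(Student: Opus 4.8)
The plan is to produce the left pointing explicitly --- the extra top degeneracy $t_{\top+1}\colon \ds{B}_{n,0}\to\ds{B}_{n+1,0}$ --- and then verify directly that it is a monomorphism, which is exactly the completeness condition for the left pointed comodule configuration $\ds{B}_{\bullet,0}\to\ds{I}$. As the Remark before the statement stresses, $t_{\top+1}$ cannot be the ordinary top degeneracy $t_\top$ (which inserts an empty top layer of the set), because that map is a section of both the old and the new top face; the whole point of the abacus modification is that it permits a section of the new top face $e_\top = d_0 f_{n,0}$ which genuinely moves data out of the poset. Concretely, I would set $t_{\top+1}(S\to\underline{n},P)$ to be the pair whose set is $S$ with a new top layer given by the isolated points of $P$ (the maximal discrete direct summand of $P$), and whose poset is $P$ with those isolated points removed.

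First I would check that this is a valid left pointing, i.e. a section of $e_\top$. Unpacking $e_\top = d_0 f_{n,0}$ as in the explicit description of $\ds{B}$ obtained from Theorem~\ref{bisimplicialalt}, the new top face deletes the last layer of the set and reattaches it to the poset as a set of isolated points; since the isolated points of $P$ together with $P$ minus its isolated points reassemble (as a disjoint union of posets) to $P$ itself, we get $e_\top\, t_{\top+1}=\id$. In the same breath I would record, matching the Remark, that $t_{\top+1}$ is \emph{not} a section of the old top face of the unmodified box product, which merely forgets the last layer of the set and would return $(S, P\setminus\{\text{isolated points}\})\neq (S,P)$.

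The substance of the lemma is that $t_{\top+1}$ is a monomorphism, i.e. has empty or contractible fibres. Unlike Lemma~\ref{rightcomplete}, where the pointing was literally $\id\times s_0$ and inherited the monomorphism property from completeness of $\ds{C}$, here being a split mono is not enough (a section of a map of groupoids need not be $(-1)$-truncated), so an actual fibre computation is required. I would compute the fibre over an object $(T\to\underline{n+1},Q)$: the decomposition ``isolated points versus the rest'' is canonical and loss-free, so $Q$ must itself have no isolated points for the fibre to be nonempty, and in that case $(T,Q)$ determines the unique preimage $(T|_{\underline n},\,T_{n+1}\sqcup Q)$, with $T_{n+1}$ the top set layer read as an antichain. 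Fullness and faithfulness then follow because an isomorphism of images restricts to the top set layer and to the reduced poset, and --- there being no order relations between the isolated points and the remainder --- these reassemble uniquely into an isomorphism of the original posets; hence every nonempty fibre is contractible.

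The step I expect to be the crux is precisely this fibre verification, and in particular getting the ``splittable part'' right. It must be the isolated points and not, for instance, the minimal elements: pulling up all minimal elements fails to be a monomorphism, since two non-isomorphic posets --- say the poset with $a,b<c$ versus $a<c$ with $b$ isolated --- have isomorphic minimal layers and isomorphic complements and are distinguished only by the cross-relations that such a split would destroy. Choosing the isolated points is the unique canonical option that is simultaneously a section of $e_\top$ distinct from $t_\top$ and information-preserving, and it is exactly this that makes the monomorphism property, and hence completeness, go through.
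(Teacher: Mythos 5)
Your proposal is correct and takes essentially the same route as the paper: the paper's $t_{\top+1}$ is likewise defined by moving the discrete part (the isolated points) of the poset into a new top layer of the set, is observed to be a section of the new $e_\top$, and is shown to be a monomorphism by the same fibre computation --- empty fibre when the target poset has isolated points, contractible fibre otherwise via the unique preimage $(T|_{\underline{n}},\, T_{n+1} \sqcup Q)$ and the unique reassembly of isomorphisms across the canonical splitting. Your explicit observation that a split monomorphism of groupoids need not be a monomorphism, so that the fibre computation is genuinely required (unlike in Lemma~\ref{rightcomplete}), is left implicit in the paper but is correct and apt.
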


\begin{proof}
We provide a left pointing: define a new extra (vertical) degeneracy map $t_{\top+1} : \ds{B}_{i,j} \to \ds{B}_{i+1,j}$ for $i \ge -1$ in the following way:
we move the discrete part of the bottom layer of the poset into a new top layer of the set.
It is a section to $e_\top$.
It is also a monomorphism: the fibre $F_{(S,P)}$ of $t_{\top+1}$ over $(S \xra{a} \underline{i{+}1},P \xra{b} \underline{j{+}1})$ is given by the pullback
\begin{center}
    \begin{tikzcd}
        F_{(S,P)} \arrow[r, ""] \arrow[dr, phantom, "\lrcorner", very near start]
           \arrow[d, ""'] & \ds{B}_{i,j} \arrow[d, "t_{\top+1}"]\\
            1 \arrow[r, "\name{(S,P)}"'] & \ds{B}_{i+1,j}.
    \end{tikzcd}
\end{center}
If $P$ is a $(j{+}1)$-layered poset such that the bottom layer has an non empty discrete part, then the fibre is empty.
Otherwise, the fibre consists of pairs $(S', P')$ such that $S' \eq a^{-1}(\{1, \dots i\})$, the discrete part $dP'_1$ of the  bottom layer of the poset $P'$ is isomorphic to the last layer of the set $S$, that is $dP'_1 \eq a^{-1}(\{i+1\})$, and $P' - dP'_1 \eq P$. There can only be one morphism, and the fibre is then contractible.
\end{proof}

\begin{prop}
    The bicomodule configuration $\ds{B}$ is Möbius.
\end{prop}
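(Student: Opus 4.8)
The plan is to unwind the definition of a Möbius bicomodule configuration and verify its two halves separately: that the right comodule configuration $v\colon \ds{B}_{0,\bullet}\to \ds{C}$ and the left comodule configuration $u\colon \ds{B}_{\bullet,0}\to \ds{I}$ are both pointed, complete, and Möbius. The pointedness and completeness are already in hand --- right pointing and right completeness are Lemma~\ref{rightcomplete}, and left pointing and left completeness are the lemma immediately preceding this proposition (where the extra top degeneracy $t_{\top+1}$ is constructed as a section of $e_\top$ and shown to be a monomorphism). What remains is, for each augmented comodule, the two Möbius conditions: local finiteness and local finite length. The leverage I would use throughout is that $\ds{C}$ and $\ds{I}$ are themselves Möbius (complete, locally finite, locally discrete, of locally finite length) by \cite{GKT:restrict}.

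For the right comodule I would exploit that $\ds{B}_{0,\bullet}$ is the lower décalage $\Dec_\bot\ds{C}$, with $v$ the décalage map and augmentation $\ds{B}_{0,-1}=\ds{I}_0\simeq 1$; its decomposition space $Y=\ds{C}$ is Möbius by hypothesis. Local finiteness is then inherited from $\ds{C}$: the groupoid $\ds{B}_{0,0}$ of $1$-layered finite posets is locally finite (finite automorphism groups), the new bottom section $s_{-1}$ is a monomorphism and hence finite, and $d_0$ is finite because $\ds{C}$ is locally finite. For local finite length, a nondegenerate simplex of $\Dec_\bot\ds{C}$ is a layering of a finite poset with all relevant layers nonempty, so the number of layers is bounded by the cardinality of the poset; hence the fibres of $d_0^{(n)}$ over any fixed object are empty for $n$ large, which is exactly the local finite length of $\ds{C}$.

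For the left comodule the decomposition space $X=\ds{I}$ is again Möbius by \cite{GKT:restrict}. Here $\ds{B}_{i,0}$ consists of pairs $(S\to\underline{i},\,P\to\underline{1})$, and the relevant structure uses the \emph{modified} top face map $e_\top=d_0 f_{i,0}$ together with the extra top degeneracy $t_{\top+1}$. Local finiteness holds because each $\ds{B}_{i,0}$ is a locally finite groupoid of (finite set, finite poset) pairs, $t_{\top+1}$ is a monomorphism by the previous lemma (hence finite), and $e_\top$ has finite fibres since the last set layer can be merged into the first poset layer in only finitely many ways (a choice of subset of a finite poset). The crucial point is local finite length: iterating $e_\top^{(n)}$ merges all $n$ set layers, as discrete elements, into the bottom poset layer, so every simplex lying over a fixed $Q\in\ds{B}_{0,0}$ has total set-plus-poset cardinality equal to $|Q|$. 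A nondegenerate simplex requires all $n$ set layers to be nonempty, forcing $n\le |Q|$, so the fibres of $e_\top^{(n)}$ over $Q$ are empty once $n>|Q|$.

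The step I expect to be the main obstacle is this last one, the local finite length of the left comodule, precisely because the governing top face map $e_\top$ is the artificially modified one produced by the abacus construction: one must check that, although $e_\top$ retains the information of the deleted set layer rather than discarding it, the length is still controlled. This is what forces the cardinality bookkeeping above, and it is exactly the payoff of the abacus map --- merging preserves total cardinality, so nondegenerate chains cannot exceed $|Q|$ in length, and imposing the further nondegeneracy condition with respect to $t_{\top+1}$ can only shorten them. Assembling the right and left halves then yields that $\ds{B}$ is a Möbius bicomodule configuration.
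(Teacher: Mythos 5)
Your proposal is correct and takes essentially the same route as the paper: local finiteness via the finiteness of automorphism groups of finite posets, the monomorphisms $s_{-1}$ and $t_{\top+1}$, and the finitely many ways to $2$-layer (or split a discrete summand off) a finite poset; and locally finite length via the observation that nondegenerate simplices have no empty layers, so a cardinality bound forces the fibres of $d_0^{(n)}$ and $e_\top^{(n)}$ over a fixed object to be empty for $n$ large. Your explicit bookkeeping for the modified top face map $e_\top$ (merging preserves total cardinality, and the extra nondegeneracy condition from $t_{\top+1}$ only cuts down the relevant groupoid) is a slightly more detailed version of exactly the argument the paper gives.
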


\begin{proof}
The groupoid $\ds{B}_{0,0}$ of $1$-layered finite posets is locally finite since each finite poset has only finitely many automorphisms. The maps $s_{-1}$ and $t_{\top+1}$ are finite (since they are monomorphisms as seen above).
The maps $d_0$ and $e_\top$ are finite because each finite poset has only a finite number of $2$-layerings.

The groupoid $\ora{\ds{B}}_{0,n}$ consists of simplices which are not in the image of a degeneracy map, including the new one $s_{-1}$. In the present poset case, a simplex is an $(n{+}1)$-layered poset with no empty layers.
Similarly, a simplex in $\ora{\ds{B}}_{n,0}$ is an $n$-layered set and a poset, such that the set does not have empty layers and the poset does not have an empty discrete part.
Finally, for all $P \in \ds{B}_{0,0}$, the fibres of $d_0^{(n)}: \ora{\ds{B}}_{0,n} \to \ds{B}_{0,0}$ and $e_\top^{(n)} : \ora{\ds{B}}_{n,0} \to \ds{B}_{0,0}$
are empty for $n$ big enough since layered posets in $\ora{\ds{B}}_{0,n}$, or layered sets in $\ora{\ds{B}}_{n,0}$, do not have empty layers.
\end{proof}

The augmented bisimplicial groupoid of layered sets and posets is a Möbius bicomodule configuration.
The main contribution of the present paper is the fact that the following formula can be derived from the generalised Rota formula of Theorem~\ref{genRota}.

\begin{thm}\label{formula}
    The Möbius function of the incidence algebra of the decomposition space $\ds{C}$ of finite posets is 
    \[
    \mu(P) = 
    \begin{cases}
       (-1)^n &\text{ if $P \in \ds{C}_1$ is a discrete poset with $n$ elements}\\
         0    &\text{ else.}
    \end{cases}
    \]
\end{thm}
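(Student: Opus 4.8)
The plan is to feed the Möbius bicomodule configuration $\ds{B}$ just constructed, with $X = \ds{I}$ and $Y = \ds{C}$, into the generalised Rota formula of Theorem~\ref{genRota}, which gives
\[
    |\mu^{\ds{I}}| \star_l |\delta^R| = |\delta^L| \star_r |\mu^{\ds{C}}|,
\]
an identity of linear functors on $\Grpd_{/\ds{B}_{0,0}}$. Since $\ds{B}_{0,0} = \ds{C}_1$ is the groupoid of finite posets, both sides are measures on finite posets. The strategy is to evaluate each side separately: the right-hand side will collapse to $\mu^{\ds{C}}$ itself, while the left-hand side, which only involves the already-known Möbius function $\mu^{\ds{I}}(S) = (-1)^{|S|}$ of finite sets, will produce the claimed closed form.

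First I would identify the two boundary terms. Both $\delta^R$ and $\delta^L$ are given by a section into $\ds{B}_{0,0}$ out of the terminal groupoid ($Y_0 = \ds{C}_0$, resp. $X_0 = \ds{I}_0$), provided by the extra degeneracies $t_{\top+1}$ and $s_{-1}$ that point the two comodules. In each case the section picks out the empty $1$-layered poset, so both $\delta^R$ and $\delta^L$ are supported on the empty poset, which is the counit of the incidence coalgebra.

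Next, the right-hand side. The right comodule span $\ds{B}_{0,0}\xleftarrow{d_0}\ds{B}_{0,1}\xrightarrow{(d_1,v)}\ds{B}_{0,0}\times\ds{C}_1$ is precisely the comultiplication of $\ds{C}$ by admissible cuts: it records, for a poset $Q$, the $2$-layerings $Q = P_1 \mid P_2$ into a down-set $P_1$ and an up-set $P_2$. Hence $\bigl(|\delta^L|\star_r|\mu^{\ds{C}}|\bigr)(Q)$ sums $\delta^L(P_1)\,\mu^{\ds{C}}(P_2)$ over these cuts; as $\delta^L$ is supported on the empty poset, only the trivial cut $P_1=\emptyset$, $P_2=Q$ survives, and the right-hand side equals $\mu^{\ds{C}}(Q)$. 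For the left-hand side I would use that the modified top face map $e_1$ in the span $\ds{B}_{0,0}\xleftarrow{e_1}\ds{B}_{1,0}\xrightarrow{(u,e_0)}\ds{I}_1\times\ds{B}_{0,0}$ glues a discrete set onto a poset as a relation-free layer, so the fibre of $e_1$ over $Q$ records the ways of writing $Q = T\sqcup P'$ with $T$ a set of isolated points and $P'$ the complementary subposet. Thus $\bigl(|\mu^{\ds{I}}|\star_l|\delta^R|\bigr)(Q)$ sums $\mu^{\ds{I}}(T)\,\delta^R(P')$ over such decompositions; since $\delta^R$ forces $P'=\emptyset$, we need $T=Q$ and $Q$ discrete, and the value is then $\mu^{\ds{I}}(Q)=(-1)^n$. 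Equating the two sides yields exactly the stated formula.

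I expect the main obstacle to be the concrete reading of the left comodule structure together with the homotopy-cardinality bookkeeping: one must check that the fibre of the modified map $e_1$ over $Q$ is exactly the groupoid of decompositions $Q = T\sqcup P'$ into an isolated discrete part and a complementary poset, and that the groupoid cardinalities in the pull-push along $e_1$, $u$ and $e_0$ recombine to reproduce $(-1)^n$ with no spurious automorphism factors. These are the elementary pullback computations anticipated in the introduction, and they are precisely where the abacus-map modification of $e_\top$ earns its keep, since the unmodified top face map would simply discard the set and make the left-hand side trivial.
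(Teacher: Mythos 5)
Your proposal is correct and follows essentially the same route as the paper's own proof: apply Theorem~\ref{genRota} to the Möbius bicomodule configuration $\ds{B}$, identify $\delta^L$ and $\delta^R$ as supported on the empty poset via the pointing sections $s_{-1}$ and $t_{\top+1}$, collapse the right-hand side to $\mu^{\ds{C}}(Q)$ through the trivial cut, and read the left-hand side via the modified top face map as $\mu^{\ds{I}}(Q)=(-1)^n$ on discrete posets. Even the fibre identification you flag as the main obstacle (the fibre of $e_1$ over $Q$ being decompositions $Q = T \sqcup P'$ into isolated points and a complementary subposet) is precisely how the paper reads the modified coaction, so there is no gap.
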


\begin{proof}
The left coaction $\gamma_l: \Grpd_{/{\ds{B}_{00}}} \to \Grpd_{/\ds{I}_{1}} \otimes \Grpd_{/{\ds{B}_{0,0}}}$ is given by the span
\[\ds{B}_{0,0}  \xleftarrow{e_1} \ds{B}_{1,0} 
    \xrightarrow{(u,e_0)} \ds{I}_{1} \times \ds{B}_{0,0},\]
where $u$ deletes the last layer, $e_0$ deletes the first layer, and $e_1$ joins the two layers.
The following picture represents elements in the corresponding groupoids.
\begin{center}
\begin{tikzpicture}[x=0.7cm,y=0.7cm]
\draw [shift={(0,0)}]  plot[domain=0:3.141592653589793,variable=\t]({1*1*cos(\t r)+0*1*sin(\t r)},{0*1*cos(\t r)+1*1*sin(\t r)});
\draw [shift={(0,-0.5)}]  plot[domain=3.141592653589793:6.283185307179586,variable=\t]({1*1*cos(\t r)+0*1*sin(\t r)},{0*1*cos(\t r)+1*1*sin(\t r)});
\draw  (-1,-0.5)-- (1,-0.5);
\draw  (-1,0)-- (1,0);
\draw  (-0.33507039454439097,0.7155329229551461)-- (-0.060062480287099776,0.3216026673974046);
\draw  (-0.060062480287099776,0.3216026673974046)-- (0.16291690965124442,0.7303982156177022);
\draw  (0.37103100692703234,0.2547088504159014)-- (0.6980674455032706,0.5520147036670269);
\draw [|->] (-2.0218566133995752,-0.20885034593266605) -- (-2.9882636770331765,-0.20885034593266605);
\draw [|->] (2.0048394850737634,-0.25277793973419327) -- (3.0005316112417164,-0.25277793973419327);
\draw [shift={(5.002652816704307,-0.503313826753555)}]  plot[domain=3.141592653589793:6.283185307179586,variable=\t]({1*1*cos(\t r)+0*1*sin(\t r)},{0*1*cos(\t r)+1*1*sin(\t r)});
\draw  (4.002652816704307,-0.503313826753555)-- (6.002652816704307,-0.503313826753555);
\draw [shift={(7.480803891134185,0.0035847926872848435)}]  plot[domain=0:3.141592653589793,variable=\t]({1*1*cos(\t r)+0*1*sin(\t r)},{0*1*cos(\t r)+1*1*sin(\t r)});
\draw  (6.480803891134184,0.0035847926872848435)-- (8.480803891134187,0.0035847926872848435);
\draw  (7.145733496589793,0.7191177156424309)-- (7.420741410847084,0.32518746008468946);
\draw  (7.420741410847084,0.32518746008468946)-- (7.643720800785427,0.7339830083049872);
\draw  (7.851834898061217,0.25829364310318625)-- (8.178871336637457,0.5555994963543118);
\draw [shift={(-5.027139919287373,-0.39413899395936436)}]  plot[domain=0:3.141592653589793,variable=\t]({1*1*cos(\t r)+0*1*sin(\t r)},{0*1*cos(\t r)+1*1*sin(\t r)});
\draw  (-6.0271399192873725,-0.39413899395936436)-- (-4.027139919287373,-0.39413899395936436);
\draw  (-5.362210313831764,0.32139392899578184)-- (-5.087202399574473,-0.07253632656195969);
\draw  (-5.087202399574473,-0.07253632656195969)-- (-4.864223009636128,0.33625922165833794);
\draw  (-4.656108912360341,-0.13943014354346295)-- (-4.329072473784102,0.15787570970766257);
\draw [fill=gray] (-0.6330445319816581,-0.6814187853623229) circle (1.5pt);
\draw [fill=gray] (-0.13505722778602272,-1.0827816872513425) circle (1.5pt);
\draw [fill=gray] (0.5636115273541225,-0.8523696509817201) circle (1.5pt);
\draw [fill=gray] (-0.33507039454439097,0.7155329229551461) circle (1.5pt);
\draw [fill=gray] (-0.060062480287099776,0.3216026673974046) circle (1.5pt);
\draw [fill=gray] (0.16291690965124442,0.7303982156177022) circle (1.5pt);
\draw [fill=gray] (0.37103100692703234,0.2547088504159014) circle (1.5pt);
\draw [fill=gray] (0.6980674455032706,0.5520147036670269) circle (1.5pt);
\draw [fill=gray] (-0.714135357439576,0.27700678940973567) circle (1.5pt);
\draw [fill=gray] (4.369608284722649,-0.6847326121158779) circle (1.5pt);
\draw [fill=gray] (4.8675955889182845,-1.086095514004898) circle (1.5pt);
\draw [fill=gray] (5.566264344058429,-0.855683477735275) circle (1.5pt);
\draw [fill=gray] (7.145733496589793,0.7191177156424309) circle (1.5pt);
\draw [fill=gray] (7.420741410847084,0.32518746008468946) circle (1.5pt);
\draw [fill=gray] (7.643720800785427,0.7339830083049872) circle (1.5pt);
\draw [fill=gray] (7.851834898061217,0.25829364310318625) circle (1.5pt);
\draw [fill=gray] (8.178871336637457,0.5555994963543118) circle (1.5pt);
\draw [fill=gray] (6.766668533694608,0.2805915820970205) circle (1.5pt);
\draw [fill=gray] (-5.667527723871605,-0.27101819288541507) circle (1.5pt);
\draw [fill=gray] (-5.159715570015579,-0.30644694780560217) circle (1.5pt);
\draw [fill=gray] (-4.321813372252499,-0.2426939222024313) circle (1.5pt);
\draw [fill=gray] (-5.362210313831764,0.32139392899578184) circle (1.5pt);
\draw [fill=gray] (-5.087202399574473,-0.07253632656195969) circle (1.5pt);
\draw [fill=gray] (-4.864223009636128,0.33625922165833794) circle (1.5pt);
\draw [fill=gray] (-4.656108912360341,-0.13943014354346295) circle (1.5pt);
\draw [fill=gray] (-4.329072473784102,0.15787570970766257) circle (1.5pt);
\draw [fill=gray] (-5.691146893818398,-0.011207323470704805) circle (1.5pt);
\end{tikzpicture}
\end{center}
This left coaction $\gamma_l$ splits a $1$-layered poset into a $1$-layered set and a $1$-layered poset.

The right coaction $\gamma_r: \Grpd_{/{\ds{B}_{00}}} \to \Grpd_{/{\ds{B}_{0,0}}} \otimes \Grpd_{/\ds{C}_{1}}$ is given by the span
\[\ds{B}_{0,0}  \xleftarrow{d_0} \ds{B}_{0,1} 
    \xrightarrow{(d_1,v)} \ds{B}_{0,0} \times \ds{C}_{1},\]
where $d_1$ deletes the last layer, $v$ deletes the first layer, and $d_0$ joins the two layers.
The following picture represents elements in the corresponding groupoids.
\begin{center}
\begin{tikzpicture}[x=0.7cm,y=0.7cm]
\draw [shift={(0,0)}]  plot[domain=0:3.141592653589793,variable=\t]({1*1*cos(\t r)+0*1*sin(\t r)},{0*1*cos(\t r)+1*1*sin(\t r)});
\draw  (-1,0)-- (1,0);
\draw  (-0.33507039454439097,0.7155329229551461)-- (-0.060062480287099776,0.3216026673974046);
\draw  (-0.060062480287099776,0.3216026673974046)-- (0.16291690965124442,0.7303982156177022);
\draw  (0.37103100692703234,0.2547088504159014)-- (0.6980674455032706,0.5520147036670269);
\draw [|->] (-2.0218566133995752,-0.20885034593266605) -- (-2.9882636770331765,-0.20885034593266605);
\draw [|->] (2.0048394850737634,-0.25277793973419327) -- (3.0005316112417164,-0.25277793973419327);
\draw  (-1,0)-- (-1,-1);
\draw  (1,-1)-- (-1,-1);
\draw  (1,-1)-- (1,0);
\draw  (0.08733605190851508,-0.36490821962635805)-- (-0.060062480287099776,0.3216026673974046);
\draw  (0.8102764549536944,-0.4430639388744853)-- (0.19480016587469037,-0.7459173509609783);
\draw [shift={(-4.987758028589019,0.013263548865066588)}]  plot[domain=0:3.141592653589793,variable=\t]({1*1*cos(\t r)+0*1*sin(\t r)},{0*1*cos(\t r)+1*1*sin(\t r)});
\draw  (-5.3228284231334095,0.7287964718202128)-- (-5.047820508876118,0.334866216262471);
\draw  (-5.047820508876118,0.334866216262471)-- (-4.824841118937774,0.7436617644827689);
\draw  (-4.616727021661986,0.2679723992809678)-- (-4.289690583085748,0.5652782525320934);
\draw  (-5.987758028589019,0.013263548865066711)-- (-5.987758028589018,-0.9867364511349334);
\draw  (-3.9877580285890186,-0.9867364511349334)-- (-5.987758028589018,-0.9867364511349334);
\draw  (-4.900421976680503,-0.3516446707612916)-- (-5.047820508876118,0.334866216262471);
\draw  (-4.177481573635324,-0.4298003900094186)-- (-4.792957862714328,-0.7326538020959118);
\draw [shift={(5,-1)}]  plot[domain=0:3.141592653589793,variable=\t]({1*1*cos(\t r)+0*1*sin(\t r)},{0*1*cos(\t r)+1*1*sin(\t r)});
\draw  (4,-1)-- (6,-1);
\draw  (5.672868239454502,-0.5493246707351692)-- (5.106075713171432,-0.8403802923399887);
\draw [shift={(7.513842668006751,-0.0009442800461938461)}]  plot[domain=0:3.141592653589793,variable=\t]({1*1*cos(\t r)+0*1*sin(\t r)},{0*1*cos(\t r)+1*1*sin(\t r)});
\draw  (6.513842668006751,-0.0009442800461938461)-- (8.51384266800675,-0.0009442800461938461);
\draw  (7.17877227346236,0.7145886429089523)-- (7.453780187719651,0.32065838735121077);
\draw  (7.453780187719651,0.32065838735121077)-- (7.676759577657995,0.7294539355715084);
\draw  (7.884873674933785,0.25376457036970756)-- (8.21191011351002,0.551070423620833);
\draw  (-3.9877580285890186,0.013263548865066588)-- (-4,-0.9867364511349334);
\draw [fill=gray] (-0.33507039454439097,0.7155329229551461) circle (1.5pt);
\draw [fill=gray] (-0.060062480287099776,0.3216026673974046) circle (1.5pt);
\draw [fill=gray] (0.16291690965124442,0.7303982156177022) circle (1.5pt);
\draw [fill=gray] (0.37103100692703234,0.2547088504159014) circle (1.5pt);
\draw [fill=gray] (0.6980674455032706,0.5520147036670269) circle (1.5pt);
\draw [fill=gray] (-0.714135357439576,0.27700678940973567) circle (1.5pt);
\draw [fill=gray] (-0.6160654213246324,-0.6189143071827722) circle (1.5pt);
\draw [fill=gray] (0.08733605190851508,-0.36490821962635805) circle (1.5pt);
\draw [fill=gray] (0.8102764549536944,-0.4430639388744853) circle (1.5pt);
\draw [fill=gray] (0.19480016587469037,-0.7459173509609783) circle (1.5pt);
\draw [fill=gray] (-5.3228284231334095,0.7287964718202128) circle (1.5pt);
\draw [fill=gray] (-5.047820508876118,0.334866216262471) circle (1.5pt);
\draw [fill=gray] (-4.824841118937774,0.7436617644827689) circle (1.5pt);
\draw [fill=gray] (-4.616727021661986,0.2679723992809678) circle (1.5pt);
\draw [fill=gray] (-4.289690583085748,0.5652782525320934) circle (1.5pt);
\draw [fill=gray] (-5.701893386028595,0.2902703382748025) circle (1.5pt);
\draw [fill=gray] (-5.603823449913651,-0.6056507583177057) circle (1.5pt);
\draw [fill=gray] (-4.900421976680503,-0.3516446707612916) circle (1.5pt);
\draw [fill=gray] (-4.177481573635324,-0.4298003900094186) circle (1.5pt);
\draw [fill=gray] (-4.792957862714328,-0.7326538020959118) circle (1.5pt);
\draw [fill=gray] (4.3867728624493525,-0.6224135038082472) circle (1.5pt);
\draw [fill=gray] (5.090174335682499,-0.36840741625183293) circle (1.5pt);
\draw [fill=gray] (5.672868239454502,-0.5493246707351692) circle (1.5pt);
\draw [fill=gray] (5.106075713171432,-0.8403802923399887) circle (1.5pt);
\draw [fill=gray] (7.17877227346236,0.7145886429089523) circle (1.5pt);
\draw [fill=gray] (7.453780187719651,0.32065838735121077) circle (1.5pt);
\draw [fill=gray] (7.676759577657995,0.7294539355715084) circle (1.5pt);
\draw [fill=gray] (7.884873674933785,0.25376457036970756) circle (1.5pt);
\draw [fill=gray] (8.21191011351002,0.551070423620833) circle (1.5pt);
\draw [fill=gray] (6.799707310567174,0.2760625093635418) circle (1.5pt);
\end{tikzpicture}
\end{center}
This right coaction $\gamma_r$ splits a $1$-layered poset into two $1$-layered posets.

    Computing the right-hand side of the formula of Theorem~\ref{genRota}, we obtain
\[(|\delta^{L}| \star_r |\mu^Y| )(P)= |\mu^Y|(P), \text{ for all poset } P \in \ds{B}_{0,0}.\]
Indeed, $|\delta^{L}|$ is different from $0$ only if evaluated on the empty poset, and we are in this situation only if the $2$-layered poset in $\ds{B}_{0,1}$ consists of an empty first layer, and a second layer with the whole original poset.
The left-hand side gives
\[(|\mu^{X}| \star_l |\delta^R|) (P)= 
\begin{cases}
    |\mu^X|(P) & \text{ if the poset $P$ is discrete}\\
       0  & \text{ otherwise.}
\end{cases}
\]
Indeed, $|\delta^{R}|$ is different from $0$ only for the empty set, and we are in this situation only if the $2$-layered poset in $\ds{B}_{1,0}$ consists of an empty second layer, and a first discrete layer (that is a set) in $\ds{I}_1$.
We conclude by recalling that the Möbius function $\mu$ of a set $S$ with $n$ elements is given by $\mu(S) = (-1)^n$.
\end{proof}

\subsection*{Möbius function of any directed restriction species}\label{sec:generalised}

We have treated the case of the decomposition space of finite posets, corresponding to the terminal directed restriction species.
A \emph{directed restriction species} is a groupoid-valued presheaf on the category $\mathbb{C}$ of finite posets and convex maps.
Every directed restriction species $R: \mathbb{C}\op \to \Grpd$ defines a decomposition space $\ds{R}$ (and hence a coalgebra); we refer to \cite{GKT:restrict} for all details.
This decomposition space comes equipped with a culf functor $\ds{R} \to \ds{C}$ \cite[Lemma 7.6]{GKT:restrict}. It follows that $\ds{R}$ is complete, locally finite, locally discrete, and of locally finite length, and is in particular a Möbius decomposition space.
Examples of directed restrictions species include rooted forests and directed graphs of various kinds \cite{GKT:restrict}.
Having computed the Möbius function for the decomposition space $\ds C$ in the previous section, we can now obtain it for all directed restriction species.

\begin{cor}\label{formulaDRS}
    The Möbius function of the incidence algebra of the decomposition space $\ds R$ associated to a directed restriction species $R : \mathbb C\op \to \Grpd$ is 
    \[
        \mu(Q) = 
    \begin{cases}
       (-1)^n &\text{ if the underlying poset of $Q \in \ds R_1$ is discrete with $n$ elements}\\
         0    &\text{ else.}
    \end{cases}
    \]
\end{cor}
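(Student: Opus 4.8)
The plan is to reduce the statement to Theorem~\ref{formula} by exploiting the culf functor $\pi \colon \ds{R} \to \ds{C}$ supplied by \cite[Lemma 7.6]{GKT:restrict}, which sends a configuration $Q \in \ds{R}_1$ to its underlying layered poset $\pi(Q) \in \ds{C}_1$. The one thing to establish is that the Möbius function is natural with respect to culf maps, in the precise sense that $\mu^{\ds{R}}(Q) = \mu^{\ds{C}}(\pi(Q))$ for every $Q \in \ds{R}_1$. Granting this, the corollary is immediate from Theorem~\ref{formula}: the underlying poset of $Q$ is by definition $\pi(Q)$, which is discrete with $n$ elements exactly when $\mu^{\ds{C}}(\pi(Q)) = (-1)^n$, and is zero otherwise.

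For the naturality I would argue as follows. By \cite[Lemma 8.2]{GKT1} the culf functor $\pi$ induces a homomorphism of incidence coalgebras; dualising, one obtains a \emph{unital} algebra homomorphism $\pi^*$ between the incidence algebras, given on functionals by precomposition with $\pi_1 \colon \ds{R}_1 \to \ds{C}_1$, so that $(\pi^*\phi)(Q) = \phi(\pi(Q))$. The zeta functional is constant equal to $1$, hence $\pi^*$ fixes it: $\pi^*(\zeta^{\ds{C}}) = \zeta^{\ds{R}}$. Since a unital algebra homomorphism preserves convolution inverses, and $\mu^{\ds{C}}$, $\mu^{\ds{R}}$ are the inverses of $\zeta^{\ds{C}}$, $\zeta^{\ds{R}}$ respectively, we conclude $\pi^*(\mu^{\ds{C}}) = \mu^{\ds{R}}$, which is precisely the identity $\mu^{\ds{R}}(Q) = \mu^{\ds{C}}(\pi(Q))$.

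Alternatively, and more concretely, I would verify the same identity at the level of nondegenerate simplices, where the Möbius function is the alternating sum $\mu = \sum_n (-1)^n \, \Phi_n$ of the long-edge spans $X_1 \leftarrow \ora{X}_n \to 1$. Being culf, $\pi$ is cartesian on active maps; applied to the long (active) edge map $\ds{R}_n \to \ds{R}_1$ this gives $\ds{R}_n \simeq \ds{R}_1 \times_{\ds{C}_1} \ds{C}_n$, while cartesianness on $s_0$ shows that a principal edge is degenerate in $\ds{R}$ iff its image is degenerate in $\ds{C}$, so the nondegeneracy condition is pulled back and $\ora{\ds{R}}_n \simeq \ds{R}_1 \times_{\ds{C}_1} \ora{\ds{C}}_n$. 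Taking fibres over $Q$ then identifies the groupoid of nondegenerate $n$-simplices over $Q$ with that over $\pi(Q)$, and summing with signs again yields $\mu^{\ds{R}}(Q) = \mu^{\ds{C}}(\pi(Q))$.

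The main obstacle is exactly this naturality step: one must check that the dual map genuinely fixes $\zeta$ and that inversion is preserved in the completed incidence algebra (equivalently, in the concrete route, that culf preserves and reflects nondegeneracy of principal edges so as to produce the pullback $\ora{\ds{R}}_n \simeq \ds{R}_1 \times_{\ds{C}_1} \ora{\ds{C}}_n$). Both are formal consequences of the culf condition, after which the computation collapses entirely onto Theorem~\ref{formula}; the finiteness needed to take homotopy cardinality is guaranteed because $\ds{R}$ is a Möbius decomposition space, as recalled above.
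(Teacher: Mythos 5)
Your proof is correct and takes essentially the same approach as the paper: the paper's entire proof is the observation that the formula of Theorem~\ref{formula} pulls back along the culf functor $\ds R \to \ds C$ of \cite[Lemma 7.6]{GKT:restrict}, citing \cite{GKT:combinatorics} for the fact that Möbius functions are preserved under culf maps. The naturality you verify in detail (via the dual algebra homomorphism fixing $\zeta$, or equivalently via the identification $\ora{\ds R}_n \eq \ds R_1 \times_{\ds C_1} \ora{\ds C}_n$ coming from cartesianness on active maps) is exactly the content of that citation, so nothing is missing.
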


\begin{proof}
Once an expression has been found for the Möbius function of the decomposition space $\ds{C}$, it can be pulled back to $\ds{R}$ along the culf functor $\ds{R} \to \ds{C}$ to obtain the corresponding expression for the Möbius function of $\ds{R}$, as done in \cite{GKT:combinatorics}.
\end{proof}

Note that an ordinary restriction species in the sense of Schmitt \cite{Schmitt} is a special case of a directed restriction species, namely one supported on discrete posets \cite{GKT:restrict}. The Möbius function then
reduces to the well-known formula $(-1)^n$ for an underlying set with $n$ elements, see \cite[\S 3.3.10]{GKT:combinatorics}.

\begin{cor}\label{trees}
The Möbius function of the Butcher--Connes--Kreimer Hopf algebra of rooted forests
is
    \[
        \mu(F) = 
    \begin{cases}
       (-1)^n &\text{ if $F$ consists of $n$ isolated root nodes}\\
         0    &\text{ else.}
    \end{cases}
    \]
\end{cor}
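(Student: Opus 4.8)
The plan is to realise rooted forests as a directed restriction species and then invoke Corollary~\ref{formulaDRS} directly. Following \cite{GKT:restrict}, the groupoid of rooted forests and their isomorphisms is the value at $\underline 1$ of a presheaf $R : \mathbb C\op \to \Grpd$ on the category of finite posets and convex maps: a rooted forest determines a finite poset on its set of nodes, and restriction along a convex inclusion is the operation of restricting the forest to a convex subset of nodes. Granting that this assignment is functorial and hence defines a directed restriction species, the associated decomposition space $\ds R$ carries the incidence bialgebra which, after passing to homotopy cardinality, is exactly the Butcher--Connes--Kreimer Hopf algebra of rooted forests.

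With this identification in hand, Corollary~\ref{formulaDRS} computes the Möbius function of $\ds R$ to be $(-1)^n$ when the underlying poset of the forest $F \in \ds R_1$ is discrete with $n$ elements, and $0$ otherwise. The only remaining step is to translate the condition ``underlying poset is discrete'' into the combinatorics of forests. The poset underlying a rooted forest records the ancestor relation: two distinct nodes are comparable precisely when one lies on the path from a root to the other. Hence the underlying poset has no nontrivial relations exactly when the forest has no edges, that is, when each of its constituent trees is a single node. A forest of this kind consists of $n$ isolated root nodes, and the stated formula follows immediately by specialising Corollary~\ref{formulaDRS}.

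The step that requires the most care is the first one: making precise the directed restriction species structure on rooted forests and verifying that the finite poset it assigns is indeed the ancestor poset, so that discreteness really does correspond to the absence of edges. Once the functor $R$ is fixed and its underlying-poset comparison $\ds R \to \ds C$ is recognised as the culf functor of \cite[Lemma 7.6]{GKT:restrict}, everything else is a direct reading of Corollary~\ref{formulaDRS}, with no further computation needed.
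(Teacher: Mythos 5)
Your proposal is correct and follows essentially the same route as the paper: realise rooted forests as a directed restriction species via their underlying (ancestor) poset as in \cite[\S 7.12]{GKT:restrict}, identify the resulting bialgebra with the Butcher--Connes--Kreimer Hopf algebra as in \cite[\S 2.2]{GKT:restrict}, and then specialise Corollary~\ref{formulaDRS}, noting that the underlying poset is discrete exactly when the forest consists of isolated root nodes. Your write-up merely makes explicit the translation step (discrete poset $\leftrightarrow$ no edges) that the paper leaves implicit.
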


\begin{proof}
    Rooted forests form an example of directed restriction species \cite[\S 7.12]{GKT:restrict}: a rooted forest has an underlying poset, with induced rooted-forest structure on convex subposets. The resulting bialgebra is the Butcher--Connes--Kreimer Hopf algebra \cite[\S 2.2]{GKT:restrict}.
\end{proof}

Finally, we obtain the Möbius function of the incidence bialgebra of $P$-trees, for any finitary polynomial endo\-functor $P: \Grpd_{/I} \to \Grpd_{/I}$, that is, given by a diagram of groupoids $I \to E \xra{p} B \to I$ such that the fibres of $E \xra{p} B$ are finite.
A $P$-tree is a tree with edges decorated in $I$, and nodes decorated in $B$; we refer to \cite{GKT:faa} for a precise definition and examples.
Note that allowing the nodeless tree, the notion of forests do not form a directed restriction species \cite[\S 7.12]{GKT:restrict}.

\begin{cor}\label{mobiusPtrees}
    The Möbius function of the incidence bialgebra of $P$-trees (for any finitary polynomial endo\-functor $P$) is
    \[
        \mu(T) = 
    \begin{cases}
       (-1)^n &\text{ if $T$ consists of $n$ $P$-corollas and possibly isolated edges}\\
         0    &\text{ else.}
    \end{cases}
    \]
\end{cor}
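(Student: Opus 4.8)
The plan is to mirror the proof of Corollary~\ref{formulaDRS}: reduce the computation to Theorem~\ref{formula} by exhibiting a culf functor from the decomposition space $\ds{P}$ of $P$-forests down to the decomposition space $\ds{C}$ of finite posets, and then pulling the Möbius function back along it. Recall from \cite{GKT:faa} that the incidence bialgebra of $P$-trees is the incidence coalgebra of a decomposition space $\ds{P}$ whose objects are $P$-forests and whose comultiplication splits a $P$-forest along its admissible cuts. The key structural input is that sending a $P$-forest to its underlying poset of nodes (with the ancestor order induced by the tree structure) defines a functor $\ds{P} \to \ds{C}$, and that this functor is culf.

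First I would check that the underlying-node-poset assignment is culf, i.e.\ cartesian on active maps. Concretely, an admissible cut of a $P$-forest is exactly a $2$-layering of its node-poset compatible with the tree structure, and conversely every $2$-layering of the node-poset lifts uniquely to such a cut once the decorations are fixed; this is precisely the cartesianness of the naturality square over the active face map $d_1$, and the higher active maps are handled in the same way. Thus the functor is culf and, by \cite[Lemma 8.2]{GKT1}, induces a coalgebra homomorphism between the incidence coalgebras of $\ds{P}$ and $\ds{C}$.

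Granting this, the Möbius function of $\ds{P}$ is obtained by pulling back the Möbius function of $\ds{C}$ computed in Theorem~\ref{formula}, exactly as in \cite{GKT:combinatorics} and Corollary~\ref{formulaDRS}: for $T \in \ds{P}_1$ one gets $\mu(T) = (-1)^n$ when the underlying node-poset of $T$ is discrete with $n$ elements, and $\mu(T) = 0$ otherwise. It then remains only to translate the condition ``node-poset discrete with $n$ elements'' into the combinatorial description in the statement. A node-poset is discrete precisely when no node lies above another, i.e.\ when no nontrivial nesting occurs, so $T$ is a disjoint union of one-node trees---the $P$-corollas---together with any number of nodeless trees, the isolated edges; the latter contribute no nodes and hence do not affect the sign. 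This yields $\mu(T) = (-1)^n$ exactly when $T$ consists of $n$ $P$-corollas and possibly isolated edges.

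The main obstacle is the construction and culf-verification of the functor $\ds{P} \to \ds{C}$, and in particular the careful treatment of nodeless trees. As remarked before the statement, allowing the nodeless tree means that $P$-forests do \emph{not} form a directed restriction species, so one cannot simply invoke Corollary~\ref{formulaDRS}; the underlying-poset functor must instead be produced directly from the free-monad description of $\ds{P}$, and one must verify that the isolated edges, which contribute the empty poset in $\ds{C}$, do not obstruct cartesianness on active maps. Once this is in place, the pullback of the Möbius function is formal.
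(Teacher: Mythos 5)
Your proposal is correct and follows the same overall strategy as the paper---compute the Möbius function by pulling back along a culf functor out of the decomposition space of $P$-forests---but it takes a slightly different route through the intermediate steps. The paper does \emph{not} construct a direct functor to the decomposition space $\ds{C}$ of posets: it uses the \emph{core} of a $P$-tree (forgetting the $P$-decoration, the leaves, and the root edge), cites \cite{KW} for the fact that the core defines a culf functor from the two-sided bar construction of the free monad to the decomposition space of combinatorial forests, and then pulls back the already-established Butcher--Connes--Kreimer formula of Corollary~\ref{trees} rather than Theorem~\ref{formula} itself. Your functor is the composite of the paper's two culf maps (the core preserves nodes, so the node-poset of a $P$-tree agrees with the node-poset of its core), so the two arguments compute the same pullback; the difference is that the paper outsources the only nontrivial verification (culfness) to the literature, whereas you propose to prove it directly via the correspondence between admissible cuts and $2$-layerings of the node-poset. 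That verification is the real content and your sketch of it is sound---every $2$-layering does lift contractibly to a decomposition as a lower $P$-tree grafted with an upper $P$-forest, with isolated edges in the upper layer absorbing the leaves of the lower tree, and by \cite[Lemma 4.3]{GKT1} cartesianness on $d_1\colon X_2 \to X_1$ suffices, which disposes of your hand-wave about ``higher active maps''---but if you want a complete proof you should either carry out that lift carefully (including the Segal/bar-construction description of the higher simplices) or simply cite \cite{KW} for the core functor as the paper does. You also correctly identified, as the paper remarks, that the presence of nodeless trees prevents a reduction to Corollary~\ref{formulaDRS}, which is exactly why the extra culf functor is needed.
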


\begin{proof}
The free monad $F$ on a finitary polynomial endofunctor $P$ is the polynomial monad represented by 
\[
    A \xla{} T'_P \xra{q} T_P \xra{} A,
\]
where $T_P$ is the groupoid of $P$-trees,  $T'_P$ is the set of isomorphism classes of $P$-trees with a marked leaf, the left map returns the decoration of the marked leaf, the right map returns the decoration of the root, and the middle map forgets the mark.
Operads can be seen as certain polynomial monads \cite[\S 2.6]{KW}.
The operations are the $P$-corollas.
The factorisations of operations correspond to cuts in trees.
The bialgebra of $P$-trees is the incidence bialgebra of the free monad on $P$, meaning the incidence bialgebra of the two-sided bar construction on the free monad on $P$, see \cite{KW}. 

The \emph{core} of a $P$-tree is the combinatorial tree obtained by forgetting the $P$-decoration, the leaves, and the root edge \cite{Kock:rootedtrees}.
It defines a culf functor from the bar-construction of a free monad to the decomposition space of trees \cite{KW}. In the same way as the proof of Corollary~\ref{formulaDRS}, we  pull back the expression of Corollary~\ref{trees} along the culf functor to conclude.
\end{proof}


\address
\end{document}